\def\E{\ifmmode{\mathbb E}\else{$\mathbb E$}\fi} 
\def\N{\ifmmode{\mathbb N}\else{$\mathbb N$}\fi} 
\def\R{\ifmmode{\mathbb R}\else{$\mathbb R$}\fi} 
\def\Q{\ifmmode{\mathbb Q}\else{$\mathbb Q$}\fi} 
\def\C{\ifmmode{\mathbb C}\else{$\mathbb C$}\fi} 
\def\H{\ifmmode{\mathbb H}\else{$\mathbb H$}\fi} 
\def\Z{\ifmmode{\mathbb Z}\else{$\mathbb Z$}\fi} 
\def\P{\ifmmode{\mathbb P}\else{$\mathbb P$}\fi} 
\def\T{\ifmmode{\mathbb T}\else{$\mathbb T$}\fi} 
\def\SS{\ifmmode{\mathbb S}\else{$\mathbb S$}\fi} 
\def\DD{\ifmmode{\mathbb D}\else{$\mathbb D$}\fi} 
\def\K{\ifmmode{\mathbb K}\else{$\mathbb K$}\fi}
\def\UU{{\mathcal U}}
\theoremstyle{theorem}
\newtheorem{thm}{Theorem}[section]
\newtheorem{lem}[thm]{Lemma}
\newtheorem{prop}[thm]{Proposition}
\theoremstyle{definition}
\newtheorem{defn}[thm]{Definition}
\newtheorem{rem}[thm]{Remark}
\newtheorem{exm}[thm]{Example}
\numberwithin{equation}{section}
\begin{document}
\title[]{Shrinking good coordinate systems associated to
Kuranishi structures.}
\author{Kenji Fukaya, Yong-Geun Oh, Hiroshi Ohta, Kaoru Ono}

\address{Simons Center for Geometry and Physics,
State University of New York, Stony Brook, NY 11794-3636 U.S.A.
\& Center for Geometry and Physics, Institute for Basic Sciences (IBS), Pohang, Korea} \email{kfukaya@scgp.stonybrook.edu}
\address{Center for Geometry and Physics, Institute for Basic Sciences (IBS), Pohang, Korea \& Department of Mathematics,
POSTECH, Pohang, Korea} \email{yongoh1@postech.ac.kr}
\address{Graduate School of Mathematics,
Nagoya University, Nagoya, Japan} \email{ohta@math.nagoya-u.ac.jp}
\address{Research Institute for Mathematical Sciences, Kyoto University, Kyoto, Japan}
\email{ono@kurims.kyoto-u.ac.jp}

\begin{abstract}
The notion of good coordinate system was introduced by Fukaya and Ono
in \cite{FO} in their construction of virtual fundamental chain via
Kuranishi structure which was also introduced therein. This notion was further
clarified in \cite{fooobook2} in some detail. In those papers no explicit ambient space
was used and hence the process of gluing local Kuranishi charts in the given good
coordinate system was not discussed there. In our more recent writing \cite{foootech, foootech2},
we use an ambient space obtained by gluing the Kuranishi charts. In this note we prove in
detail that we can always shrink the given good coordinate system so that the resulting
`ambient space' becomes Hausdorff.
This note is self-contained and uses only standard facts in general topology.
\end{abstract}

\maketitle
\section{Introduction }
In \cite{FO,fooobook2} the present authors associated a virtual
fundamental chain to a space with Kuranishi structure.
For the construction we used the notion of {\it good coordinate system}.
The process of constructing a good coordinate system out of
Kuranishi structure corresponds to that of choosing and fixing an atlas
consisting of a locally finite covering of coordinate charts in the manifold theory.
\par
In \cite{FO,fooobook2} the process to associate the virtual fundamental
chain to a space with good coordinate system,
is described {\it without} using `ambient space', that is, the
space obtained by gluing Kuranishi charts by coordinate change.
In our more recent writing, \cite{foootech, foootech2},  which
contains further detail of this construction, we describe the same
process using `ambient space', explicitly.
For the description of the construction of virtual fundamental chain using
ambient space, certain properties, especially Hausdorff-ness, of the ambient space
is necessary.
\par
In \cite{FO,fooobook2}, the tools of Kuranishi structure and its associated
good coordinate system are applied to study moduli spaces of stable maps.
The moduli space of stable maps can be very singular in general but
we can embed a small portion thereof at each point of the moduli space
locally into an orbifold which is called a Kuranishi neighborhood.
An element of a Kuranishi neighborhood appearing in such applications is a
`map' with domain a nodal curve satisfying a differential equation, that is,
a slightly perturbed Cauchy-Riemann equation. To write down this perturbed Cauchy-Riemann equation,
one needs to fix various extra data locally in our moduli space. Because of this reason,
the union of Kuranishi neighborhoods cannot be globally regarded as a subset
of certain well-defined  set of maps, and gluing \emph{the given} Kuranishi neighborhoods
to construct an ambient space a priori may not make sense.
The main result of the present article is to show that we can, however, always shrink the given
Kuranishi neighborhoods and the domains of coordinate change and
glue the resulting shrinked neighborhoods to obtain certain reasonable space,
which one may call an `ambient space' or a `virtual neighborhood'.
It also shows that we can always do so, \emph{after some shrinking}, by employing only
elementary general topology arguments, with the originally
given definition of good coordinate system  in \cite{FO,fooobook2}.

\par
Our purpose of writing this short note is to separate the abstract
combinatorial general topology issue from other parts of the story of
Kuranishi structure given in \cite{foootech2} and its implementations, and to clarify the parts of general topology.
This note is self-contained and can be read independently of the previous knowledge of Kuranishi
structures.

\section{Statement}

To make it clear that the arguments of this note do not involve
the properties of orbifolds, vector bundles on them,
the smoothness of the coordinate change and others,
we introduce the following abstract notions that
lie in the realm of general topology and not of manifold theory.

In this note, $X$ is always assumed to be a locally compact separable
metrizable space.
\begin{defn}\label{abKchart}
An {\it abstract K-chart} of $X$ consists of $\mathcal U = (U,S,\psi)$ where
$U$ is a locally compact separable metrizable space, $S \subseteq U$ is a closed subset
and $\psi : S \to X$ is a homeomorphism onto an open subset.
\end{defn}
\begin{defn}\label{abKchange}
Let $\mathcal U_i = (U_i,S_i,\psi_i)$ $(i=1,2)$ be abstract K-charts of $X$.
A {\it coordinate change} from $\mathcal U_1$ to $\mathcal U_2$
consists of $\Phi_{21} = (U_{21},\varphi_{21})$ such that:
\begin{enumerate}
\item $U_{21} \subseteq U_1$ is an open set.
\item $\varphi_{21} : U_{21} \to U_2$ is a topological embedding, i.e., a continuous map
which is a homeomorphism onto its image.
\item $S_1 \cap U_{21} =  \varphi_{21}^{-1}(S_2)$.
Moreover $\psi_2 \circ \varphi_{21} = \psi_1$ on $S_1 \cap U_{21}$
(i.e., whenever both are defined).
\item
$\psi_{1}(S_1 \cap U_{21}) = \psi_1(S_1) \cap \psi_2(S_2)$.
\end{enumerate}
\end{defn}
\begin{defn}\label{gcsweak}
Let $Z \subseteq X$ be a compact subset.
An {\it abstract good coordinate system of $Z$ in the weak sense}
is ${\widetriangle{\mathcal U}} = (\frak P,\{\mathcal U_{\frak p}\},\{\Phi_{\frak p\frak q}\})$
with the following properties.
\begin{enumerate}
\item
$\frak P$ is a partially ordered set.
We assume $\frak P$ is a finite set.
\item
For $\frak p \in \frak P$, $\mathcal U_{\frak p} = (U_{\frak p},S_{\frak p},\psi_{\frak p})$ is an abstract K-chart.
\item
If $\frak q \le \frak p$ then
a coordinate change $\Phi_{\frak p\frak q} = (U_{\frak p\frak q},\varphi_{\frak p\frak q})$
from $\mathcal U_{\frak q}$ to $\mathcal U_{\frak p}$ in the sense of Definition \ref{abKchange}
is defined. We require
$U_{\frak p\frak p} = U_{\frak p}$ and $\varphi_{\frak p\frak p}$ to be the
identity map.
\item
If $\frak r \le \frak q \le \frak p$ then
$
\varphi_{\frak p \frak r} = \varphi_{\frak p \frak q}\circ
\varphi_{\frak q \frak r}
$
on $U_{\frak p\frak q\frak r} := \varphi_{\frak q\frak r}^{-1}(U_{\frak p\frak q}) \cap U_{\frak p\frak r}$
(i.e., whenever both are defined).
\item
If $\psi_{\frak p}(S_{\frak p}) \cap \psi_{\frak q}(S_{\frak q})
\ne \emptyset$ then either
$\frak p \le \frak q$ or $\frak q \le \frak p$ holds.
\item
$
\bigcup \psi_{\frak p}(S_{\frak p}) \supseteq Z.
$
\end{enumerate}
\end{defn}

\begin{defn}\label{defn2424}
Let ${\widetriangle{\mathcal U}} = (\frak P,\{\mathcal U_{\frak p}\},\{\Phi_{\frak p\frak q}\})$
be an abstract good coordinate system of $Z$ in the weak sense.
We consider the disjoint union
$
\coprod_{\frak p} U_{\frak p}
$
and define a relation $\sim$ on it as follows.
Let $x \in U_{\frak p}, y \in U_{\frak q}$. We say $x\sim y$ if one of the following holds.
We put $\Phi_{\frak p\frak q} = (U_{\frak p\frak q},\varphi_{\frak p\frak q})$.
\begin{enumerate}
\item[(a)] $\frak p = \frak q$ and $x = y$.
\item[(b)] $\frak p \le \frak q$, $x \in U_{\frak q\frak p}$  and $y = \varphi_{\frak q\frak p}(x)$.
\item[(c)] $\frak q\le \frak p$, $y \in U_{\frak p\frak q}$ and $x = \varphi_{\frak p\frak q}(y)$.
\end{enumerate}
\end{defn}

\begin{defn}\label{defstrong}
An abstract good coordinate system of $Z$ in the weak sense
${\widetriangle{\mathcal U}}
= (\frak P,\{\mathcal U_{\frak p}\},\{\Phi_{\frak p\frak q}\})$
is said to be
an {\it abstract good coordinate system of $Z$ in the strong sense}
if the following holds.
\begin{enumerate}
\item[7)] The relation $\sim$ is an equivalence relation.
\item[8)]
The quotient space
$
(\coprod_{\frak p} U_{\frak p})/\sim
$
is Hausdorff with respect to the quotient topology.
\end{enumerate}
We denote by $\vert {\widetriangle{\mathcal U}} \vert$
the quotient space
$
(\coprod_{\frak p} U_{\frak p})/\sim
$
equipped with quotient topology.
\end{defn}

\begin{rem}\label{remark26}
Suppose $\frak p < \frak q < \frak r$ and
$x \in U_{\frak p}$, $y \in U_{\frak q}$, $z \in U_{\frak r}$.
We assume $x \sim y$ and $x \sim z$. Then, by definition,
$x \in U_{\frak q\frak p}$, $y = \varphi_{\frak q\frak p}(x)$.
Moreover
$x \in U_{\frak r\frak p}$, $z = \varphi_{\frak r\frak p}(x)$.
Therefore if $y \in U_{\frak r\frak q}$ in addition,
then Definition \ref{gcsweak} 4) implies
$z = \varphi_{\frak r\frak q}(y)$, and hence
$z \sim y$. Namely the transitivity holds in this case.
\par
However $y \in U_{\frak r\frak q}$ may not be satisfied in general.
This is a reason why Definition \ref{defstrong} 7)
does not follow from Definition \ref{gcsweak} 1) - 6).
\end{rem}
\begin{exm}\label{exm22}
Suppose $\frak P = \{1,2\}$ with $1<2$,
$U_1 = U_2 =\R$, $U_{21} = (-1,1)$.
$\varphi_{21} : (-1,1) \to \R$ is the {inclusion} map.
We also take $S_1 = S_2 = X = Z = \{0\}$ and
$\psi_1 = \psi_2$ is the identity map.
\par
They satisfy Definition \ref{gcsweak} 1) - 6) and
Definition \ref{defstrong} 7).
However the space $U_1 \sqcup U_2/\sim$ is not Hausdorff.
In fact $1 \in U_1$ and $1 \in U_2$ do not have separating neighborhoods.
\end{exm}

\begin{defn}\label{defn2626}
\begin{enumerate}
\item
Let $V$ be an open subset of a separable metrizable space
$U$.
We say that  $V$ is a \emph{shrinking} of $U$ and write $V \Subset U$, if $V$ is relatively compact in $U$, i.e., the closure $\overline V$ in $U$ is
compact.\footnote{We remark in a rare situation where $V$ is both
open and compact it may happen $V \Subset U$ and $V= U$.}
\item
Let $\mathcal U = (U,S,\psi)$
be an abstract K chart and $U_0 \subseteq  U$
be an open subset. We put
$\mathcal U\vert_{U_0} = (U_{0},S \cap U_{0},\psi\vert_{S \cap U_0})$.
This is an abstract K chart.
If $U_0 \Subset U$,
we say $\mathcal U\vert_{U_0}$ is a {\it shrinking} of  $\mathcal U$.

\item
Let
$\widetriangle{\UU}
= (\frak P, \{\mathcal U_\frak p\},
\{\Phi_{\frak p\frak q}\})$ be an abstract good coordinate system of $Z$ in the weak sense.
We say an abstract good coordinate system $\widetriangle{\UU}^0 = (\frak P, \{\mathcal U_\frak p^0\},
\{\Phi_{\frak p\frak q}^0\})$ of $Z$ in the weak sense is a shrinking of $\widetriangle{\UU}$ if the following hold:
\begin{enumerate}
\item
Each of $\mathcal U_\frak p^0$ is a shrinking of $\mathcal U_\frak p$
\item For $\frak p \geq \frak q$,
the domain of $\Phi_{\frak p\frak q}^0$ is a shrinking of the domain of
$\Phi_{\frak p\frak q}$ and $\Phi_{\frak p\frak q}^0$ is a restriction of
$\Phi_{\frak p\frak q}$
\end{enumerate}
\end{enumerate}
\end{defn}
\begin{thm}[Shrinking Lemma]\label{thmmain}
Suppose $\widetriangle{\UU} = (\frak P,\{\mathcal U_{\frak p}\},\{\Phi_{\frak p\frak q}\})$
is an abstract good coordinate system of $Z$ in the weak sense.
Then
there exists a shrinking $\widetriangle{\UU}^0$ of $\widetriangle{\UU}$
that becomes an abstract good coordinate system of $Z$ in the strong sense.
\end{thm}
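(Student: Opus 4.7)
The plan is a finite iterated-shrinking procedure, leveraging finiteness of $\frak P$, compactness of $Z$, and local compactness, second-countability, and normality of each $U_{\frak p}$. First I establish compact cores for the covering: since $\{\psi_{\frak p}(S_{\frak p})\}$ is a finite open cover of the compact set $Z$ in the normal space $X$, I choose $V_{\frak p}\subseteq X$ with $\overline{V_{\frak p}}\subseteq \psi_{\frak p}(S_{\frak p})$ and $Z\subseteq \bigcup_{\frak p} V_{\frak p}$, and set $K_{\frak p}:=\psi_{\frak p}^{-1}(\overline{V_{\frak p}})$, a compact subset of $S_{\frak p}$. Every subsequent shrinking of $U_{\frak p}$ will be required to contain an open neighborhood of $K_{\frak p}$, which automatically preserves condition (6) of Definition \ref{gcsweak} throughout.

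Next I address transitivity of $\sim$. The obstruction identified in Remark \ref{remark26} is that the cocycle $\varphi_{\frak p \frak r}=\varphi_{\frak p \frak q}\circ\varphi_{\frak q \frak r}$ is only asserted on the triple-overlap $U_{\frak p\frak q\frak r}$, a possibly proper subset of $U_{\frak p \frak r}$. I will replace each $U_{\frak p \frak r}$ (for $\frak r<\frak p$) by
\[
U_{\frak p \frak r}\;\cap\;\bigcap_{\frak r<\frak q<\frak p}\varphi_{\frak q \frak r}^{-1}(U_{\frak p \frak q}),
\]
processed in an order compatible with $\frak P$ so that the cocycle is inherited at each successive step. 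The remaining transitivity cases, notably the ``tent'' case where a middle element dominates both ends ($\frak p\le\frak q\ge\frak r$ with $y\in U_{\frak q}$), are handled by condition (5) of Definition \ref{gcsweak} near $Z$ (using the compact cores $K_{\frak p}$) and by the Hausdorff arrangement below elsewhere. After this step $\sim$ is an equivalence relation.

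For Hausdorff-ness I perform a second shrinking $U_{\frak p}^{00}\Subset U_{\frak p}^{0}$ and $U_{\frak p \frak q}^{00}\Subset U_{\frak p \frak q}^{0}$, arranged so that for each comparable pair the closure of $\varphi_{\frak p \frak q}(U_{\frak p \frak q}^{00})$ computed in $U_{\frak p}^{0}$ agrees with $\varphi_{\frak p \frak q}(\overline{U_{\frak p \frak q}^{00}})$ (closure computed in $U_{\frak q}^{0}$). This equality is automatic once $\overline{U_{\frak p \frak q}^{00}}$ is compact and contained in $U_{\frak p \frak q}^{0}$, because $\varphi_{\frak p \frak q}$ is then a topological embedding of a compactum into the Hausdorff space $U_{\frak p}^{0}$; it is precisely the condition that rules out the pathology of Example \ref{exm22}. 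Distinct classes $[x]\ne[y]$ in the inner quotient are then separated chart-by-chart by normality of each $U_{\frak p}^{0}$ together with this closure matching.

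The chief difficulty is bookkeeping: each shrinking above can destroy a property arranged at an earlier step, including condition (6). My plan is to work in nested layers $U_{\frak p}^{(0)}\Subset U_{\frak p}^{(1)}\Subset\cdots\Subset U_{\frak p}^{(N)}=U_{\frak p}$, with $N$ larger than the longest chain in $\frak P$ and each layer inserted strictly between two predecessors by normality, giving every single shrinking enough slack to preserve everything arranged earlier. No individual step should require more than normality, Urysohn-type separation, and compactness; the real combinatorial load lies in sequencing the transitivity intersections so that they interact correctly with the closure-matching of the Hausdorff step.
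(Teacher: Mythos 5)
Your plan identifies some of the right ingredients (compact cores $K_{\frak p}$, relative compactness of the coordinate-change domains to exclude the pathology of Example \ref{exm22}), but it differs essentially from the paper's argument, and the transitivity step has real gaps that I do not see how to close as written.

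First, the intersection
$U_{\frak p \frak r}\cap\bigcap_{\frak r<\frak q<\frak p}\varphi_{\frak q \frak r}^{-1}(U_{\frak p \frak q})$
is contained in $\bigcap_{\frak r<\frak q<\frak p}U_{\frak q\frak r}$, since $\varphi_{\frak q\frak r}^{-1}$ only makes sense on $U_{\frak q\frak r}$. But a point $x\in S_{\frak r}\cap U_{\frak p\frak r}$ with $\psi_{\frak r}(x)\in\psi_{\frak p}(S_{\frak p})$ need not lie in $U_{\frak q\frak r}$ for an intermediate $\frak q$ (nothing forces $\psi_{\frak r}(x)\in\psi_{\frak q}(S_{\frak q})$), so this intersection can cut away part of $S_{\frak r}\cap U_{\frak p\frak r}$ that must survive for Definition \ref{abKchange} 4) to hold for the shrunken charts. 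In other words, the first inclusion of \eqref{assum1lem} fails and the resulting data is no longer a good coordinate system in the weak sense. Second, even granting this, the intersection only closes the ``wedge from the bottom'' case of Remark \ref{remark26} ($x\sim y$, $x\sim z$ with $x$ in the minimal chart): if instead $x\sim y$ and $y\sim z$ with $\frak r<\frak q<\frak p$, you need $x\in U_{\frak p\frak r}$, and there is nothing that forces this from $x\in\varphi_{\frak q\frak r}^{-1}(U_{\frak p\frak q})$ alone. The tent/valley case with possibly incomparable extremes is the hardest one, and your proposal defers it to ``condition (5) near $Z$ and the Hausdorff arrangement elsewhere'' without a mechanism that actually prevents two incomparable charts from overlapping in the quotient through a common intermediate chart at a point away from the $S_{\frak p}$'s.

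The paper takes a qualitatively different route that handles all three cases at once: it chooses a metric and forms a one-parameter family $U_{\frak p}^{\delta}$ of $\delta$-neighborhoods of $\psi_{\frak p}^{-1}(K_{\frak p})$, sets $U_{\frak p\frak q}^{\delta}=U_{\frak q}^{\delta}\cap(\varphi_{\frak p\frak q}^1)^{-1}(U_{\frak p}^{\delta})$ (which automatically keeps the required $S$-part by condition 4) of the coordinate change for the outer shrinking $\widetriangle{\UU_1}$), and argues by compactness/contradiction: if transitivity or Hausdorffness failed for a sequence $\delta_n\to 0$, Lemma \ref{convinientlem} extracts a subsequence of counterexamples converging into the zero sets $S_{\frak p}$, whose images in $X$ agree, and there conditions 3)--5) of Definitions \ref{abKchange} and \ref{gcsweak} (together with condition 4) for the cocycle) force the relation you are trying to negate; contradiction. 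The key mechanism you are missing is precisely this: forcing the overlaps to concentrate arbitrarily close to the zero sets as the shrinking parameter tends to $0$, so that the ``bad configuration'' limits into a region where the combinatorics of the poset is fully controlled by condition 5). Your nested-layer bookkeeping does not by itself achieve this; it merely asserts slack, whereas the paper's $\delta$-neighborhood construction provides it concretely and uniformly across all charts and all transitivity patterns.
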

\begin{rem}
Suppose $(V,E,\Gamma,s,\psi)$ is a Kuranishi neighborhood in the sense of
\cite[Definition A1.1]{fooobook2} or \cite[Definition 6.1]{FO}.
Then the triple
$(V/\Gamma,s^{-1}(0)/\Gamma,\psi)$ is an abstract K-chart in the sense of
Definition \ref{abKchart}.
It is easy to see that a coordinate change in the sense of
\cite[(A1.12)]{fooobook2} or \cite[Definition 6.1]{FO} induces a coordinate change
in the sense of Definition \ref{abKchange}.\footnote{
Note that Definition \ref{abKchange} 4) is required for coordinate changes
appearing in good coordinate systems.}
\par
Thus a good coordinate system in the sense of
\cite[Lemma A1.11]{fooobook2} or \cite[Definition 6.1]{FO}
induces an abstract good coordinate system
in the weak sense (of $X$) of Definition \ref{gcsweak}.
\par
The two conditions 7), 8) appearing in Definition \ref{defstrong}
is exactly the same as the conditions 7), 8) in \cite[Definition 3.14]{foootech2}.
\par
Thus Theorem \ref{thmmain} implies that we can always shrink
a good coordinate system in the sense of
\cite[Lemma A1.11]{fooobook2} or \cite[Definition 6.1]{FO}
to obtain one in the sense of \cite[Definition 3.14]{foootech2}.
\par
Note Theorem \ref{thmmain} is used  during the proof of
\cite[Theorem 3.30]{foootech2}, which claims the existence of
good coordinate system.
\par
As for a similar point on the paper \cite{foootech}, see
Remark 2.8  and Proposition 6.1 of the preprint version
 arXiv:1405.1755v1 of this paper.
\end{rem}
We will also prove the following:
\begin{prop}\label{prop51}
Let ${\widetriangle{\mathcal U}} = (\frak P,\{\mathcal U_{\frak p}\},\{\Phi_{\frak p\frak q}\})$
be an abstract good coordinate system
in the strong sense of $Z$. Let $U'_{\frak p}\Subset U_{\frak p}$
chosen for each $\frak p$. (Here  $\mathcal U_{\frak p} = (U_{\frak p},S_{\frak p},\psi_{\frak p})$.)
We consider the image $U'_{\frak p} \to \vert {\widetriangle{\mathcal U}}  \vert$
and denote it by the same symbol $U'_{\frak p}$. Then the union
$$
U' = \bigcup_{\frak p \in \frak P} U'_{\frak p} \subseteq \vert {\widetriangle{\mathcal U}}  \vert
$$
is separable and metrizable with respect to the induced topology.
\end{prop}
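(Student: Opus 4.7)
The plan is to realize $U'$ as a subspace of a compact Hausdorff subspace of $\vert \widetriangle{\mathcal U} \vert$ that I can show is metrizable; separability and metrizability of $U'$ then follow immediately from those of its ambient space.

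For each $\frak p$, set $K_{\frak p} := \overline{U'_{\frak p}} \subseteq U_{\frak p}$, the closure being taken in $U_{\frak p}$. Since $U'_{\frak p} \Subset U_{\frak p}$ and $U_{\frak p}$ is locally compact separable metrizable, each $K_{\frak p}$ is a compact metric space, and because $\frak P$ is finite the disjoint union $K := \coprod_{\frak p} K_{\frak p}$ is again a compact metric space. Let $\pi \colon \coprod_{\frak p} U_{\frak p} \to \vert \widetriangle{\mathcal U} \vert$ be the quotient map. I consider $\widetilde K := \pi(K) = \bigcup_{\frak p} \pi(K_{\frak p})$, which is compact as a continuous image of $K$, and Hausdorff as a subspace of $\vert \widetriangle{\mathcal U} \vert$ (which is Hausdorff by Definition \ref{defstrong}(8)). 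Since $U' = \bigcup_{\frak p} \pi(U'_{\frak p}) \subseteq \widetriangle K$, and the subspace topology is transitive, it is enough to prove that $\widetilde K$ is metrizable.

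For this last step I would invoke the classical general-topology fact that \emph{a compact Hausdorff space which is the continuous image of a compact metric space is metrizable}. The proof is by producing a countable basis of $\widetilde K$ and applying Urysohn's metrization theorem (compact Hausdorff implies regular). Concretely, let $\mathcal B$ be a countable basis of $K$ that is closed under finite unions, and put $W_B := \widetilde K \setminus \pi(K \setminus B)$ for $B \in \mathcal B$. Each $\pi(K \setminus B)$ is compact, hence closed in the Hausdorff space $\widetilde K$, so each $W_B$ is open. To verify that $\{W_B\}_{B \in \mathcal B}$ is a basis, I take open $V \subseteq \widetilde K$ and $y \in V$; the fibre $\pi^{-1}(y)$ is compact and contained in the open set $\pi^{-1}(V) \subseteq K$, so it is covered by finitely many basic open sets lying inside $\pi^{-1}(V)$ whose union $B \in \mathcal B$ satisfies $\pi^{-1}(y) \subseteq B \subseteq \pi^{-1}(V)$; this unwinds, using that $\pi$ is surjective onto $\widetilde K$, to $y \in W_B \subseteq V$. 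The only even mildly delicate point---and hence the main obstacle---is this basis verification, which uses both the Hausdorff property of $\vert \widetriangle{\mathcal U} \vert$ and the compactness of $K$ in an essential way; the remainder of the argument is a direct assembly of the definitions and the strong-sense hypothesis.
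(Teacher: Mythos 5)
Your proof is correct and follows the same overall strategy as the paper: both pass to the compact subset $K=\bigcup_{\frak p}\Pi_{\frak p}(\overline{U'_{\frak p}})$ of $\vert\widetriangle{\mathcal U}\vert$, note it is compact Hausdorff (hence regular), and reduce to exhibiting a countable basis so that Urysohn's metrization theorem applies. The only difference is packaging of the second-countability step: the paper builds basis sets $U(\vec i)=\operatorname{int}\bigl(\bigcup_{\frak p}\Pi_{\frak p}(U_{\frak p,i_{\frak p}})\bigr)$ indexed by one basic open of each $K_{\frak p}$, exploiting that the quotient map restricted to each $K_{\frak p}$ is injective, whereas you appeal to the general statement that a compact Hausdorff continuous image of a compact metric space is metrizable, realized via $W_B=\widetilde K\setminus\pi(K\setminus B)$ with $\mathcal B$ closed under finite unions. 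These are really the same construction: the paper's closed set $K_0$ in Lemma \ref{9sublem1} is exactly your $\pi(K\setminus B)$ with $B=\coprod_{\frak p}U_{\frak p,i_{\frak p}}$. Your formulation is a touch more economical because it invokes (and reproves) the standard perfect-map lemma instead of rederiving it in the specific finite-disjoint-union setting; either is fine.
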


\section{Proof of the main theorem}\label{maintop}

\begin{lem}\label{lem3131}
Let
$\widetriangle{\UU}
= (\frak P, \{\mathcal U_\frak p\},
\{\Phi_{\frak p\frak q}\})$ be an abstract good coordinate system of $Z$ in the weak sense
and $U_{\frak p}^0 \subseteq U_{\frak p}$, $U_{\frak p\frak q}^0 \subseteq U_{\frak p\frak q}$
be open subsets.
We assume
\begin{equation}\label{assum1lem}
\varphi_{\frak p\frak q}^{-1}(U^0_{\frak p})
\cap U^0_{\frak q}
\cap S_{\frak q}
\subseteq
U^0_{\frak p\frak q}
\subseteq 
\varphi_{\frak p\frak q}^{-1}(U^0_{\frak p})
\cap U^0_{\frak q}.
\end{equation}
for $\frak q \le \frak p$ and 
\begin{equation}\label{assum2lem}
\bigcup_{\frak p \in \frak P} \psi_{\frak p}(U_{\frak p}^0) \supseteq Z.
\end{equation}
Then 
 $\widetriangle{\UU_0} = (\frak P, \{\mathcal U_\frak p\vert_{U_{\frak p}^0}\},
\{\Phi_{\frak p\frak q}\vert_{U_{\frak p\frak q}^0}\})$
is an abstract good coordinate system of $Z$ in the weak sense.
\end{lem}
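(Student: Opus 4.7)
The plan is to verify the six conditions of Definition \ref{gcsweak} for $\widetriangle{\UU_0}$ in turn. The first five verifications are essentially inheritance from the original system; the serious content lies in checking that each restricted $\Phi_{\frak p\frak q}\vert_{U_{\frak p\frak q}^0}$ remains a coordinate change in the sense of Definition \ref{abKchange}, which is where the sandwich hypothesis \eqref{assum1lem} actually gets used.

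The easy items are as follows. Finiteness of $\frak P$ is automatic. Each $\mathcal U_\frak p\vert_{U_\frak p^0}$ is an abstract K-chart: the only point to note is that $\psi_\frak p(S_\frak p \cap U_\frak p^0)$ is open in $X$, because $\psi_\frak p : S_\frak p \to \psi_\frak p(S_\frak p)$ is a homeomorphism onto an open subset of $X$ and $S_\frak p \cap U_\frak p^0$ is open in $S_\frak p$. The comparability condition \ref{gcsweak}(5) is inherited since $\psi_\frak p(S_\frak p \cap U_\frak p^0) \subseteq \psi_\frak p(S_\frak p)$. The covering condition \ref{gcsweak}(6) is exactly hypothesis \eqref{assum2lem}. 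For the cocycle condition \ref{gcsweak}(4), the inclusions $U_{\frak q\frak r}^0 \subseteq U_{\frak q\frak r}$, $U_{\frak p\frak q}^0 \subseteq U_{\frak p\frak q}$, $U_{\frak p\frak r}^0 \subseteq U_{\frak p\frak r}$ place the new triple intersection inside the old $U_{\frak p\frak q\frak r}$, and the original cocycle identity simply restricts.

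Turning to Definition \ref{abKchange}, conditions 1 and 2 follow immediately from the upper bound of \eqref{assum1lem}, which yields both $U_{\frak p\frak q}^0 \subseteq U_\frak q^0$ and $\varphi_{\frak p\frak q}(U_{\frak p\frak q}^0) \subseteq U_\frak p^0$, so the restriction of $\varphi_{\frak p\frak q}$ is a well-defined topological embedding into the new target. Condition 3, namely
$$S_\frak q \cap U_\frak q^0 \cap U_{\frak p\frak q}^0 = \bigl(\varphi_{\frak p\frak q}\vert_{U_{\frak p\frak q}^0}\bigr)^{-1}(S_\frak p \cap U_\frak p^0),$$
likewise follows from the original condition 3 combined with the upper bound, and the compatibility $\psi_\frak p \circ \varphi_{\frak p\frak q} = \psi_\frak q$ is inherited verbatim on the smaller domain.

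The main step, and the only place where the lower bound of \eqref{assum1lem} is genuinely used, is condition 4 of Definition \ref{abKchange}:
$$\psi_\frak q\bigl(S_\frak q \cap U_\frak q^0 \cap U_{\frak p\frak q}^0\bigr) = \psi_\frak q(S_\frak q \cap U_\frak q^0) \cap \psi_\frak p(S_\frak p \cap U_\frak p^0).$$
The inclusion $\subseteq$ follows from the already-verified condition 3. For the reverse inclusion, suppose $z = \psi_\frak q(y) = \psi_\frak p(x)$ with $y \in S_\frak q \cap U_\frak q^0$ and $x \in S_\frak p \cap U_\frak p^0$. The original condition 4, together with injectivity of $\psi_\frak q$ on $S_\frak q$, forces $y \in U_{\frak p\frak q}$; then original condition 3 and injectivity of $\psi_\frak p$ on $S_\frak p$ give $\varphi_{\frak p\frak q}(y) = x \in U_\frak p^0$, so $y \in \varphi_{\frak p\frak q}^{-1}(U_\frak p^0) \cap U_\frak q^0 \cap S_\frak q$. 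Precisely the lower bound of \eqref{assum1lem} now places $y$ inside $U_{\frak p\frak q}^0$, finishing the argument. The two-sided sandwich is therefore doing real work: the upper bound covers conditions 1--3 of Definition \ref{abKchange}, while the lower bound is exactly what is needed to close condition 4.
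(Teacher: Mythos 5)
Your proof is correct and follows essentially the same route as the paper's: the nontrivial content is verifying that each restricted $\Phi_{\frak p\frak q}\vert_{U^0_{\frak p\frak q}}$ is still a coordinate change, the upper bound of \eqref{assum1lem} handles Definition~\ref{abKchange}~1)--3), and the lower bound is precisely what secures the reverse inclusion in condition~4); the remaining clauses of Definition~\ref{gcsweak} are inherited by restriction. The paper phrases the verification of conditions 3) and 4) as set-theoretic identities inside $U_{\frak q}$ (then applies $\psi_{\frak q}$) while you argue pointwise, but the underlying logic is identical.
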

\begin{proof}
We first show that 
$\Phi_{\frak p\frak q}\vert_{U_{\frak p\frak q}^0}$
is a coordinate change: $\mathcal U_\frak q\vert_{U_{\frak q}^0} \to \mathcal U_\frak p\vert_{U_{\frak p}^0}$.
Definition \ref{abKchange} 1), 2) are obvious.
Definition \ref{abKchange} 3) follows from
$$
\aligned
\varphi_{\frak p\frak q}^{-1}(U^0_{\frak p} 
\cap S_{\frak p})
\cap U^0_{\frak p\frak q}
&=
\varphi_{\frak p\frak q}^{-1}(S_{\frak p})
\cap 
\varphi_{\frak p\frak q}^{-1}(U^0_{\frak p})
\cap U^0_{\frak p\frak q} \\
&=
S_{\frak q} \cap U_{\frak p\frak q}
\cap 
\varphi_{\frak p\frak q}^{-1}(U^0_{\frak p})
\cap U^0_{\frak p\frak q} \\
&=
S_{\frak q} 
\cap 
\varphi_{\frak p\frak q}^{-1}(U^0_{\frak p})
\cap 
 U^0_{\frak p\frak q} \\
 &=
S_{\frak q} \cap U^0_{\frak q}
\cap 
\varphi_{\frak p\frak q}^{-1}(U^0_{\frak p})
\cap 
 U^0_{\frak p\frak q}
 \\
 &=
 S_{\frak q} \cap U^0_{\frak p\frak q}.
\endaligned
$$
The second equality is Definition \ref{abKchange} 3) 
for $\Phi_{\frak p\frak q}$ and the last equality follows 
from the second inclusion of (\ref{assum1lem}).
\par
We next prove  Definition \ref{abKchange} 4).
Let $\frak q\le \frak p$. (\ref{assum1lem}) implies
$$
S_{\frak q}  \cap  U^0_{\frak q}
\cap \varphi^{-1}_{\frak p\frak q}(U^0_{\frak p})
=
S_{\frak q}  \cap  U^0_{\frak p\frak q}
$$
Therefore using the fact $\varphi^{-1}_{\frak p\frak q}(S_{\frak p})
\subseteq S_{\frak q}$, we have
$$
S_{\frak q}  \cap  U^0_{\frak q}
\cap \varphi^{-1}_{\frak p\frak q}(S_{\frak p}\cap U^0_{\frak p})
=S_{\frak q}  \cap  U^0_{\frak p\frak q}.
$$
Thus Definition \ref{abKchange} 4) holds.
\par
We thus checked Definition \ref{gcsweak} 3).
Definition \ref{gcsweak} 1),2),4),5) 
follow from the corresponding properties of 
$\widetriangle{\UU}$.
Definition \ref{gcsweak} 6) is a consequence of 
(\ref{assum2lem}).
\end{proof}
\begin{lem}\label{lemKexi}
Let
$\widetriangle{\UU}
= (\frak P, \{\mathcal U_\frak p\},
\{\Phi_{\frak p\frak q}\})$ be an abstract good coordinate system of $Z$ in the weak sense.
Then there exist compact subsets $K_{\frak p} \subseteq X$ such that
\begin{equation}\label{eq33}
\bigcup_{\frak p\in\frak P} K_{\frak p} \supseteq Z, 
\qquad
K_{\frak p} \subseteq  \psi_{\frak p}(S_{\frak p}).
\end{equation}
\end{lem}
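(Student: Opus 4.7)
The plan is to exploit the fact that $X$ is locally compact Hausdorff (since it is locally compact separable metrizable) and that $Z$ is compact, together with the finiteness of $\frak P$.

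First I would observe that by Definition \ref{abKchart}, each $\psi_{\frak p}(S_{\frak p})$ is an open subset of $X$, and by Definition \ref{gcsweak} 6) the finite family $\{\psi_{\frak p}(S_{\frak p})\}_{\frak p \in \frak P}$ is an open cover of the compact set $Z$. Second, for each $z \in Z$ I would choose some $\frak p(z) \in \frak P$ with $z \in \psi_{\frak p(z)}(S_{\frak p(z)})$, and then, using local compactness of $X$, choose an open neighborhood $W_z$ of $z$ in $X$ whose closure $\overline{W_z}$ is compact and satisfies
\[
\overline{W_z} \subseteq \psi_{\frak p(z)}(S_{\frak p(z)}).
\]
The existence of such $W_z$ is the standard fact that in a locally compact Hausdorff space, every open neighborhood of a point contains an open neighborhood with compact closure.

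Third, the family $\{W_z\}_{z \in Z}$ covers the compact set $Z$, so I extract a finite subcover $W_{z_1}, \dots, W_{z_n}$. Then for each $\frak p \in \frak P$ I set
\[
K_{\frak p} \;=\; \bigcup_{\,i\,:\,\frak p(z_i)=\frak p} \overline{W_{z_i}},
\]
with the convention that $K_{\frak p} = \emptyset$ if the index set is empty. Since $\frak P$ is finite and each $\overline{W_{z_i}}$ is compact, $K_{\frak p}$ is a finite union of compact sets, hence compact; by construction $K_{\frak p} \subseteq \psi_{\frak p}(S_{\frak p})$. Finally,
\[
\bigcup_{\frak p \in \frak P} K_{\frak p} \;\supseteq\; \bigcup_{i=1}^{n} W_{z_i} \;\supseteq\; Z,
\]
giving (\ref{eq33}).

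There is no serious obstacle here; the argument is pure general topology and uses nothing about the coordinate change data $\{\Phi_{\frak p\frak q}\}$. The only point to be a little careful about is to invoke local compactness of $X$ rather than merely normality, so that the shrunk sets $\overline{W_z}$ are automatically compact, not merely closed.
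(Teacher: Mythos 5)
Your proposal is correct and follows essentially the same route as the paper: cover $Z$ by relatively compact neighborhoods $W_z$ with $\overline{W_z}$ contained in some $\psi_{\frak p(z)}(S_{\frak p(z)})$, take a finite subcover, and define $K_{\frak p}$ as the union of the corresponding closures. The paper's proof is a more condensed version of the same argument.
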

\begin{proof}
Since 
$
\bigcup_{\frak p\in\frak P} \psi_{\frak p}(S_{\frak p}) \supseteq Z
$
is an open covering, for any $x \in Z$ there exist its neighborhood
$U_x$ and $\frak p(x) \in \frak P$ such that 
$U_x \Subset \psi_{\frak p(x)}(S_{\frak p(x)})$.
We cover our compact set $Z$ by finitely many $\{U_{x_{\ell}} \mid 
\ell = 1,\dots, L\}$ of them.
Then
$
K_{\frak p} := \bigcup_{\ell; \frak p(x_\ell) = \frak p}  \overline U_{x_{\ell}}
$
has the required properties.
\end{proof}
\begin{prop}\label{prop1}
Any  abstract good coordinate system of $Z$ in the weak sense
has a shrinking.
\end{prop}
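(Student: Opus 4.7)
The plan is to construct open subsets $U_{\frak p}^0 \Subset U_{\frak p}$ and $U_{\frak p\frak q}^0 \Subset U_{\frak p\frak q}$ meeting the hypotheses of Lemma~\ref{lem3131}; the lemma will then certify that the shrunk data is an abstract good coordinate system of $Z$ in the weak sense, and the relative compactness built into the choices will make it a shrinking of $\widetriangle{\UU}$. First I apply Lemma~\ref{lemKexi} to obtain compact sets $K_{\frak p} \subseteq \psi_{\frak p}(S_{\frak p})$ with $\bigcup_{\frak p} K_{\frak p} \supseteq Z$. For each $\frak p$, the set $\psi_{\frak p}^{-1}(K_{\frak p})$ is a compact subset of the locally compact space $U_{\frak p}$, so I can choose an open $U_{\frak p}^0$ with $\psi_{\frak p}^{-1}(K_{\frak p}) \subseteq U_{\frak p}^0 \Subset U_{\frak p}$. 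Condition~(\ref{assum2lem}) then holds since $\bigcup_{\frak p} \psi_{\frak p}(U_{\frak p}^0) \supseteq \bigcup_{\frak p} K_{\frak p} \supseteq Z$.

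For each pair $\frak q \le \frak p$, write $B_{\frak p\frak q} := \varphi_{\frak p\frak q}^{-1}(U_{\frak p}^0) \cap U_{\frak q}^0 \cap S_{\frak q}$, the set $U_{\frak p\frak q}^0$ is forced to contain by the lower inclusion of~(\ref{assum1lem}). My intended choice is to take $U_{\frak p\frak q}^0$ to be an open relatively compact neighborhood of $B_{\frak p\frak q}$ in $U_{\frak p\frak q}$, intersected with the open set $\varphi_{\frak p\frak q}^{-1}(U_{\frak p}^0) \cap U_{\frak q}^0$ to secure the upper inclusion. The whole argument then hinges on showing that the closure $\overline{B_{\frak p\frak q}}$ taken in $U_{\frak q}$ is compact \emph{and} is contained in $U_{\frak p\frak q}$; this is the main obstacle, since a priori $B_{\frak p\frak q}$ is only open in $S_{\frak q}$ and its closure in $U_{\frak q}$ could escape the open set $U_{\frak p\frak q}$.

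To overcome this I transport the picture to $X$ via the homeomorphism $\psi_{\frak q}\vert_{S_{\frak q}}$. Using Definition~\ref{abKchange}(3) together with $\psi_{\frak p} \circ \varphi_{\frak p\frak q} = \psi_{\frak q}$ on $S_{\frak q} \cap U_{\frak p\frak q}$, a short verification identifies $\psi_{\frak q}(B_{\frak p\frak q}) = \psi_{\frak p}(U_{\frak p}^0 \cap S_{\frak p}) \cap \psi_{\frak q}(U_{\frak q}^0 \cap S_{\frak q})$. Because $U_{\frak p}^0 \Subset U_{\frak p}$, the $X$-closure of $\psi_{\frak p}(U_{\frak p}^0 \cap S_{\frak p})$ sits inside the compact set $\psi_{\frak p}(\overline{U_{\frak p}^0} \cap S_{\frak p}) \subseteq \psi_{\frak p}(S_{\frak p})$, and hence the $X$-closure of $\psi_{\frak q}(B_{\frak p\frak q})$ lies in $\psi_{\frak p}(S_{\frak p}) \cap \psi_{\frak q}(S_{\frak q})$, which by Definition~\ref{abKchange}(4) equals $\psi_{\frak q}(S_{\frak q} \cap U_{\frak p\frak q})$. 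Pulling back through $\psi_{\frak q}\vert_{S_{\frak q}}$ and observing that $\overline{B_{\frak p\frak q}}$ in $U_{\frak q}$ automatically sits in the closed set $S_{\frak q}$, I obtain $\overline{B_{\frak p\frak q}} \subseteq U_{\frak p\frak q}$, and this closure is compact as a closed subset of $\overline{U_{\frak q}^0}$. Local compactness of $U_{\frak p\frak q}$ then yields an open relatively compact neighborhood $V_{\frak p\frak q}$ of $\overline{B_{\frak p\frak q}}$, and setting $U_{\frak p\frak q}^0 := V_{\frak p\frak q} \cap \varphi_{\frak p\frak q}^{-1}(U_{\frak p}^0) \cap U_{\frak q}^0$ fulfills both inclusions of~(\ref{assum1lem}) while remaining relatively compact in $U_{\frak p\frak q}$; Lemma~\ref{lem3131} then delivers the required shrinking.
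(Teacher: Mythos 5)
Your proposal is correct and follows the paper's overall structure essentially verbatim: obtain the $K_{\frak p}$ via Lemma~\ref{lemKexi}, choose relatively compact open $U_{\frak p}^0$ containing $\psi_{\frak p}^{-1}(K_{\frak p})$, form $B_{\frak p\frak q}=\varphi_{\frak p\frak q}^{-1}(U^0_{\frak p})\cap U^0_{\frak q}\cap S_{\frak q}$ (this is exactly the paper's $A^0_{\frak p\frak q}$ from~(\ref{defA})), show its closure in $U_{\frak q}$ is compact and contained in $U_{\frak p\frak q}$, pass to a relatively compact open neighborhood, and apply Lemma~\ref{lem3131}.

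The one place you genuinely diverge is the proof of the key sublemma, which in the paper is Lemma~\ref{lem1}. The paper uses a sequence argument internal to $U_{\frak q}$ and $U_{\frak p}$: take $x_a\in A^0_{\frak p\frak q}$, extract a convergent subsequence in $\overline{U^0_{\frak q}}$, push forward by $\varphi_{\frak p\frak q}$ and extract again in $\overline{U^0_{\frak p}}$, then match limits in $X$ via $\psi_{\frak p}\circ\varphi_{\frak p\frak q}=\psi_{\frak q}$. You instead transport everything to $X$ once and for all through the identity $\psi_{\frak q}(B_{\frak p\frak q})=\psi_{\frak p}(U^0_{\frak p}\cap S_{\frak p})\cap\psi_{\frak q}(U^0_{\frak q}\cap S_{\frak q})$, bound the $X$-closure by the compact sets $\psi_{\frak p}(\overline{U^0_{\frak p}}\cap S_{\frak p})$ and $\psi_{\frak q}(\overline{U^0_{\frak q}}\cap S_{\frak q})$, place it inside $\psi_{\frak p}(S_{\frak p})\cap\psi_{\frak q}(S_{\frak q})=\psi_{\frak q}(S_{\frak q}\cap U_{\frak p\frak q})$ (Definition~\ref{abKchange}(4)), and pull back through the homeomorphism $\psi_{\frak q}\vert_{S_{\frak q}}$ onto the open set $\psi_{\frak q}(S_{\frak q})$. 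Both arguments rest on the same structural inputs (Definition~\ref{abKchange}(3),(4) and relative compactness of the $U^0$'s); yours is a sequence-free point-set argument, arguably cleaner in that it makes the role of condition~(4) more transparent, while the paper's is more hands-on. The small care your version requires, and which you do handle correctly, is the distinction between the $X$-closure of $\psi_{\frak q}(B_{\frak p\frak q})$ and the closure relative to the open subspace $\psi_{\frak q}(S_{\frak q})$, the latter being what $\psi_{\frak q}(\overline{B_{\frak p\frak q}})$ actually equals.
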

\begin{proof}
Let $\widetriangle{\UU}
= (\frak P, \{\mathcal U_\frak p\},
\{\Phi_{\frak p\frak q}\})$ be an abstract good coordinate system of $Z$ in the weak sense.
We take compact subsets $K_{\frak p}$ satisfying (\ref{eq33}).
Since $\psi_{\frak p}$ is a topological embedding $\psi_{\frak p}^{-1}(K_{\frak p})$
is compact.  There exists $U_{\frak p}^0$
such that
$\psi_{\frak p}^{-1}(K_{\frak p}) \subseteq U^0_{\frak p} \Subset U_{\frak p}$, 
since $U_{\frak p}$ is locally compact.
Then (\ref{assum2lem})
is satisfied. We put
\begin{equation}\label{defA}
A^0_{\frak p\frak q} =S_{\frak q} \cap \varphi_{\frak p\frak q}^{-1}(U^0_{\frak p}) \cap U_{\frak q}^0.
\end{equation}
Let $A_{\frak p\frak q}$ be its closure in $U_{\frak q}$.
\begin{lem}\label{lem1}
$A_{\frak p\frak q} \subseteq U_{\frak p\frak q}$ and is compact.
\end{lem}
\begin{proof}
Let $x_a \in A^0_{\frak p\frak q}$ be a sequence.
We will prove that it has a subsequence which converges to an element of  $U_{\frak p\frak q}$.
Since $x_a \in U_{\frak q}^0 \Subset U_{\frak q}$
we may assume that $x \in U_{\frak q}$ is its limit.
By definition of $ A^0_{\frak p\frak q}$, $y_a:= \varphi_{\frak p\frak q}(x_a) \in S_{\frak p} \cap U^0_{\frak p}$.
Since $ U^0_{\frak p}$ is relatively compact in $U_{\frak p}$, there is a subsequence of $\{y_a \}$ such that
it converges to some $y \in U_{\frak p}$.
On the other hand, by  Definition \ref{abKchange} 3), $\psi_{\frak p}(y_a) = \psi_{\frak q}(x_a)$.
Then by continuity of $\psi_{\frak p}: S_{\frak p} \to X$, $\psi_{\frak q}: S_{\frak q} \to X$,
$\psi_{\frak q}(x) = \psi_{\frak p}(y)$. (We use the fact that $X$ is Hausdorff here.) 
Obviously this point is contained in
$\psi_{\frak p}(S_{\frak p}) \cap \psi_{\frak q}(S_{\frak q})$ which is equal to $\psi_{\frak q}(S_{\frak q} \cap U_{\frak p\frak q})$ by  Definition \ref{abKchange}
4). By the injectivity of $\psi_{\frak q}$ on $S_{\frak q}$, this implies
$x \in U_{{\frak p\frak q}}$. This finishes the proof.
\end{proof}

 Using Lemma \ref{lem1} and the local 
 compactness of $U_{\frak p\frak q}$, we then take $V^0_{\frak p\frak q}$ such that
\begin{equation}\label{U121}
A_{\frak p\frak q} \subseteq V^0_{\frak p\frak q}  \Subset U_{\frak p\frak q}
\end{equation}
and put 
$$
U_{\frak p\frak q}^0 = V^0_{\frak p\frak q} \cap 
\varphi_{\frak p\frak q}^{-1}(U^0_{\frak p})
\cap U^0_{\frak q}.
$$
Since $A_{\frak p\frak q}^0 \subseteq \varphi_{\frak p\frak q}^{-1}(U^0_{\frak p})
\cap U^0_{\frak q}$,
(\ref{U121}) implies
$
A^0_{\frak p\frak q} \subseteq U^0_{\frak p\frak q}  \Subset U_{\frak p\frak q}.
$
Since $U_{\frak p}^0$ and $U_{\frak p\frak q}^0$ satisfy (\ref{assum1lem}) 
and (\ref{assum2lem}), Proposition \ref{prop1} follows from Lemma \ref{lem3131}.
\end{proof}

We start the proof of the main theorem.
We take a shrinking 
 $\widetriangle{\UU_1} = (\frak P, \{\mathcal U_\frak p\vert_{U_{\frak p}^1}\},
\{\Phi_{\frak p\frak q}\vert_{U_{\frak p\frak q}^1}\})$
of given
$\widetriangle{\UU}
= (\frak P, \{\mathcal U_\frak p\},
\{\Phi_{\frak p\frak q}\})$.
We put
\begin{equation}
\varphi^1_{\frak p\frak q} = \varphi_{\frak p\frak q}\vert_{U_{\frak p\frak q}^1}.
\end{equation}
We apply Lemma \ref{lemKexi} to  $\widetriangle{\UU_1}$ to obtain $K_{\frak p}.$ 
We take a metric $d_{\frak p}$ of $U_{\frak p}$
and put:

\begin{equation}\label{defnnnn}
U_{\frak p}^{\delta} = \{ x \in U^1_{\frak p} \mid d_{\frak p}(x,\psi_{\frak p}^{-1}(K_{\frak p})) < \delta\}.
\end{equation}
Since $\psi_{\frak p}^{-1}(K_{\frak p})$ is compact
and $U^1_{\frak p}$ is locally compact,
$U_{\frak p}^{\delta} 
\Subset U^1_{\frak p}$ for sufficiently small $\delta$.
\par
We use the next lemma several times in this section.
\begin{lem}\label{convinientlem}
Suppose $\frak q \le \frak p$, 
$\delta_n \to 0$ and $x_n \in {U_{\frak q}^{\delta_n}}
\cap (\varphi^1_{\frak p\frak q})^{-1}({U_{\frak q}^{\delta_n}})$.
Then there exists a subsequence of $x_n$, still denoted by $x_{n}$,
such that:
\begin{enumerate}
\item $x_{n}$ converges to $x \in S_{\frak q}$.
\item
$\varphi_{\frak p\frak q}^1(x_{n})$ converges to $y \in S_{\frak p}$.
\item
$\psi_{\frak q}(x) = \psi_{\frak p}(y) \in K_{\frak p}\cap K_{\frak q}$.
\item
$x \in U^1_{\frak p\frak q}$ and $y =\varphi^1_{\frak p\frak q}(x)$.
\end{enumerate}
\end{lem}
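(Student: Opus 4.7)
The plan is to extract convergent subsequences of $\{x_n\}$ and $\{\varphi^1_{\frak p\frak q}(x_n)\}$ using the compactness of $\psi_\frak q^{-1}(K_\frak q)$ and $\psi_\frak p^{-1}(K_\frak p)$, and then to identify their limits using the relative compactness $U^1_{\frak p\frak q} \Subset U_{\frak p\frak q}$ together with Definition \ref{abKchange}(3) and the inclusion (\ref{assum1lem}) satisfied by the shrinking $\widetriangle{\UU_1}$.

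First, by (\ref{defnnnn}), $U_\frak q^{\delta_n}$ is the $\delta_n$-neighborhood of the compact set $\psi_\frak q^{-1}(K_\frak q)\subseteq S_\frak q\cap U^1_\frak q$ (where $K_\frak q\subseteq \psi_\frak q(S_\frak q)$ comes from Lemma \ref{lemKexi}), and $\delta_n\to 0$ forces $d_\frak q(x_n,\psi_\frak q^{-1}(K_\frak q))\to 0$. Passing to a subsequence, $x_n\to x\in \psi_\frak q^{-1}(K_\frak q)\subseteq S_\frak q$, giving (1). Applying the same reasoning to $\varphi^1_{\frak p\frak q}(x_n)\in U_\frak p^{\delta_n}$ yields, after a further subsequence, $\varphi^1_{\frak p\frak q}(x_n)\to y\in \psi_\frak p^{-1}(K_\frak p)\subseteq S_\frak p$, giving (2).

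For (3) and (4), since $U^1_{\frak p\frak q}\Subset U_{\frak p\frak q}$ its closure in $U_\frak q$ is compact and contained in $U_{\frak p\frak q}$; as $x_n\in U^1_{\frak p\frak q}$, the limit satisfies $x\in U_{\frak p\frak q}$, so $\varphi_{\frak p\frak q}(x)$ is defined and continuity of $\varphi_{\frak p\frak q}$ on $U_{\frak p\frak q}$ gives $\varphi_{\frak p\frak q}(x)=\lim \varphi_{\frak p\frak q}(x_n)=y$. Since $x\in S_\frak q\cap U_{\frak p\frak q}$, Definition \ref{abKchange}(3) then yields $\psi_\frak p(y)=\psi_\frak q(x)$, which lies in $K_\frak p\cap K_\frak q$, proving (3). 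Moreover, $y\in U^1_\frak p$ and $x\in U^1_\frak q\cap S_\frak q$, so $x\in S_\frak q\cap U^1_\frak q\cap \varphi_{\frak p\frak q}^{-1}(U^1_\frak p)$; applying (\ref{assum1lem}) to $\widetriangle{\UU_1}$ gives $x\in U^1_{\frak p\frak q}$, and $y=\varphi^1_{\frak p\frak q}(x)$ by restriction, completing (4).

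The main obstacle is the middle step: the $x_n$ themselves need not lie on $S_\frak q$, so Definition \ref{abKchange}(3) cannot be invoked on them directly, and $\varphi^1_{\frak p\frak q}$ need not extend continuously to the closure of $U^1_{\frak p\frak q}$. The relative compactness $U^1_{\frak p\frak q}\Subset U_{\frak p\frak q}$ built into the shrinking is exactly what bridges the gap, letting us evaluate the unrestricted $\varphi_{\frak p\frak q}$ at the limit $x\in U_{\frak p\frak q}$ and then transport the identity $\psi_\frak p\circ \varphi_{\frak p\frak q}=\psi_\frak q$ from $S_\frak q\cap U_{\frak p\frak q}$ to the limit.
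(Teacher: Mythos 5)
Your proof is correct and follows essentially the same route as the paper's. You extract convergent subsequences, identify the limits in $\psi_\frak q^{-1}(K_\frak q)$ and $\psi_\frak p^{-1}(K_\frak p)$, push the limit through $\varphi_{\frak p\frak q}$ using $U^1_{\frak p\frak q}\Subset U_{\frak p\frak q}$ (which is exactly the crux, as you correctly note), and then conclude $x\in U^1_{\frak p\frak q}$.

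One small point of imprecision in the last step: you justify $x\in U^1_{\frak p\frak q}$ by ``applying (\ref{assum1lem}) to $\widetriangle{\UU_1}$.'' But $\widetriangle{\UU_1}$ is taken to be an arbitrary shrinking in the sense of Definition \ref{defn2626}, and a shrinking need not literally arise from Lemma \ref{lem3131}, so (\ref{assum1lem}) is not guaranteed as stated. The inclusion you actually need, namely $\varphi_{\frak p\frak q}^{-1}(U^1_\frak p)\cap U^1_\frak q\cap S_\frak q\subseteq U^1_{\frak p\frak q}$, does hold for any shrinking, but the clean way to see it is via Definition \ref{abKchange}(4) applied to $\Phi^1_{\frak p\frak q}$: since $\psi_\frak q(x)=\psi_\frak p(y)\in K_\frak p\cap K_\frak q\subseteq\psi_\frak q(S_\frak q\cap U^1_\frak q)\cap\psi_\frak p(S_\frak p\cap U^1_\frak p)=\psi_\frak q(S_\frak q\cap U^1_{\frak p\frak q})$, injectivity of $\psi_\frak q$ on $S_\frak q$ gives $x\in U^1_{\frak p\frak q}$. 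This is precisely the citation the paper makes, and it avoids appealing to a condition that is not part of the definition of ``shrinking.'' Apart from this bookkeeping matter the argument is sound.
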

\begin{proof}
Let $\delta_0 > 0$ be a fixed sufficiently small constant such that
$U_\frak p^{\delta_0} \subseteq U_\frak p^1$, and consider $\delta> 0$
with $\delta < \delta_0$.
Since $U^{\delta}_{\frak p} \Subset U^{\delta_0}_{\frak p}$
and $U^{\delta}_{\frak q} \Subset U^{\delta_0}_{\frak q}$
for small $\delta$, we may take a subsequence such that 
$x_{n}$ and $\varphi_{\frak p\frak q}^1(x_{n})$ converge to $x
\in U^{\delta_0}_{\frak q}$ and $y \in U^{\delta_0}_{\frak p}$, respectively.
\par
Then (\ref{defnnnn}) implies $x \in \psi_{\frak q}^{-1}(K_{\frak q})$
and $y \in \psi_{\frak p}^{-1}(K_{\frak p})$.
We have proved 1), 2).
\par
Since $x_n \in U^1_{\frak p\frak q} \Subset U_{\frak p\frak q}$,
its limit $x$ is in  $U_{\frak p\frak q}$.
Since $\varphi_{\frak p\frak q}$ is defined on $U_{\frak p\frak q}$
and is continuous, we have
$
\varphi_{\frak p\frak q}(x) = \varphi_{\frak p\frak q}(\lim_{n\to \infty} x_n)
= \lim_{n\to \infty}\varphi_{\frak p\frak q}(x_n)
= y.
$
Then by Definition \ref{abKchange} 3) 
we have $\psi_{\frak q}(x) = \psi_{\frak p}(y)$.
Note $\psi_{\frak q}(x) \in K_{\frak q}$ 
and  $\psi_{\frak q}(y) \in K_{\frak q}$.
Therefore 3) holds.
\par
Then $x \in U^1_{\frak p\frak q}$ follows from 
Definition \ref{abKchange} 4)
and $K_\frak p \subseteq \psi_\frak p(S_\frak p \cap U_\frak p^1)$, 
$K_\frak q \subseteq \psi_\frak q(S_\frak q \cap U_\frak q^1)$.
\end{proof}
We take a decreasing sequence of positive numbers $\delta_n$ 
with $\lim_{n\to\infty}\delta_n = 0$ and put
\begin{equation}\label{formula39}
U^n_{\frak p} = U_{\frak p}^{\delta_n}, \qquad 
U^n_{\frak p\frak q} = U_{\frak q}^{\delta_n} \cap (\varphi^{1}_{\frak p\frak q})^{-1}(U_{\frak p}^{\delta_n}).
\end{equation}
We remark 
$
U^n_{\frak p\frak q} \subseteq U^1_{\frak p\frak q}
$
since $U^1_{\frak p\frak q}$ is the domain of $\varphi^{1}_{\frak p\frak q}$.
\par
By  Lemma \ref{lem3131}, $\widetriangle{\UU_n} = (\frak P, \{\mathcal U_\frak p\vert_{U_{\frak p}^n}\},
\{\Phi_{\frak p\frak q}\vert_{U_{\frak p\frak q}^n}\})$
is an abstract good coordinate system of $Z$ in the weak sense.
Since $U_{\frak p}^{n} \subseteq U_{\frak p}^{1} \Subset U_{\frak p}$
and $U_{\frak p\frak q}^{n} \subseteq U_{\frak p\frak q}^{1} \Subset U_{\frak p\frak q}$,
$\widetriangle{\UU_n}$ is a shrinking of $\widetriangle{\UU}$.
\par
We will prove that $\widetriangle{\UU_n}$ 
is an abstract good coordinate system of $Z$ in the {\it strong} sense
for sufficiently large $n$.
The proof occupies the rest of this section.
We put
\begin{equation}\label{313}
C^n_{\frak p} = \overline{U_{\frak p}^{n}}, \qquad 
C^n_{\frak p\frak q} = \overline{U_{\frak q}^{n}} \cap (\varphi^{1}_{\frak p\frak q})^{-1}(\overline{U_{\frak p}^{n}}).
\end{equation}
Here $\overline{U_{\frak p}^{n}}$ is the closure of $U_{\frak p}^{n}$ in $U_{\frak p}$, 
which coincides with the closure of  $U_{\frak p}^{n}$ in $U_{\frak p}^1$.
(This is because $U^n_{\frak p} \Subset U^1_{\frak p}$.)
Moreover $C^n_{\frak p}$ is compact.
We consider
$$
\hat U^n = \coprod_{\frak p\in \frak P}  U_{\frak p}^{n},
\qquad
\hat C^n =\coprod_{\frak p\in \frak P}  C_{\frak p}^{n}
$$
where the right hand sides are disjoint union.
Note $\hat U^n \subseteq \hat C^n$.
We define a relation on $\hat U$ by applying Definition \ref{defn2424} to $\widetriangle{\UU_n}$ .
We denote it by $\sim_n$.
We also define a relation $\sim'_n$ on $\hat C^n$ as follows. 
\begin{defn}
Let $x \in C^n_{\frak p}$ and $y \in C^n_{\frak q}$.
 We say $x\sim'_n y$ if one of the following holds. \begin{enumerate}
\item[(a)] $\frak p = \frak q$ and $x = y$.
\item[(b)] $\frak p \le \frak q$, 
$x \in C^n_{\frak q\frak p}$ and $y = \varphi^1_{\frak q\frak p}(x)$.
\item[(c)] $\frak q\le \frak p$, 
$y \in C^n_{\frak p\frak q}$ and $x = \varphi^1_{\frak p\frak q}(y)$.
\end{enumerate}
\end{defn}
The next lemma is immediate from our choice (\ref{formula39}) of $U_{\frak p\frak q}^n$.
\begin{lem}\label{onaji}
Let $x,y \in \hat U^n \subseteq \hat C^n$. Then 
$x \sim_n y$ if and only if $x \sim'_n y$.
\end{lem}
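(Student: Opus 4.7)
My plan is to unfold the two definitions and compare them case-by-case. Both relations are given by the three clauses (a), (b), (c), and the only formal difference is that $\sim_n$ asks for membership in the open set $U^n_{\frak q\frak p}$ while $\sim'_n$ asks for membership in the closed set $C^n_{\frak q\frak p}$; the maps being applied in clauses (b) and (c) agree wherever both are defined because $\varphi_{\frak p\frak q}\vert_{U^n_{\frak p\frak q}}$ is just the restriction of $\varphi^1_{\frak p\frak q}$. Since clause (a) is literally identical for the two relations, only clauses (b) and (c) require actual verification, and these are symmetric to each other, so it is enough to treat (b).

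The forward direction ($\sim_n \Rightarrow \sim'_n$) is immediate from the inclusions $U^n_{\frak p} \subseteq \overline{U^n_{\frak p}}$ and $U^n_{\frak q} \subseteq \overline{U^n_{\frak q}}$: unfolding
$$
U^n_{\frak q\frak p} = U^n_{\frak p} \cap (\varphi^1_{\frak q\frak p})^{-1}(U^n_{\frak q})
\;\subseteq\;
\overline{U^n_{\frak p}} \cap (\varphi^1_{\frak q\frak p})^{-1}(\overline{U^n_{\frak q}})
= C^n_{\frak q\frak p},
$$
so any pair $(x,y)$ satisfying clause (b) of $\sim_n$ automatically satisfies clause (b) of $\sim'_n$ with the same value $y = \varphi^1_{\frak q\frak p}(x)$.

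For the reverse direction, the key observation is that the hypothesis $x, y \in \hat U^n$ lets us promote membership in $C^n_{\frak q\frak p}$ back to membership in $U^n_{\frak q\frak p}$. Suppose $x \sim'_n y$ via clause (b): so $\frak p \le \frak q$, $x \in C^n_{\frak q\frak p}$, and $y = \varphi^1_{\frak q\frak p}(x)$. Say $x \in U^n_{\frak p'}$ and $y \in U^n_{\frak q'}$; the form of clause (b) then forces $\frak p' = \frak p$ and $\frak q' = \frak q$, so $x \in U^n_{\frak p}$ and $y \in U^n_{\frak q}$. But $\varphi^1_{\frak q\frak p}(x) = y \in U^n_{\frak q}$ means $x \in (\varphi^1_{\frak q\frak p})^{-1}(U^n_{\frak q})$, which combined with $x \in U^n_{\frak p}$ gives $x \in U^n_{\frak q\frak p}$ by the definition (\ref{formula39}). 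Hence $x \sim_n y$.

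The symmetric argument handles clause (c), and there is no real obstacle here, since the lemma is essentially a bookkeeping check. The only content is the elementary point that the "extra" conditions imposed by $\sim'_n$ (closure membership of the image under $\varphi^1$) become free once both endpoints already lie in the open sets $U^n_{\frak p}, U^n_{\frak q}$, because the coordinate change $\varphi^1_{\frak q\frak p}$ automatically carries $x$ to $y$ in this open set.
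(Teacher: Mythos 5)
Your proof is correct and takes essentially the same route the paper indicates: the paper dismisses the lemma as ``immediate from our choice (\ref{formula39}) of $U^n_{\frak p\frak q}$,'' and your argument is precisely the unfolding of that choice, showing $U^n_{\frak q\frak p}\subseteq C^n_{\frak q\frak p}$ for the forward direction and using $x\in U^n_{\frak p}$, $\varphi^1_{\frak q\frak p}(x)=y\in U^n_{\frak q}$ to recover $x\in U^n_{\frak q\frak p}$ for the reverse.
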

We now prove:
\begin{prop}
The relations
$\sim_n$ and $\sim_n'$ are equivalence relations for sufficiently large $n$.
\end{prop}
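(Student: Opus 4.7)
Reflexivity and symmetry of $\sim_n$ and $\sim_n'$ are immediate from clauses (a)--(c) in their definitions, so only transitivity requires work; moreover, Lemma \ref{onaji} reduces transitivity of $\sim_n$ on $\hat U^n$ to that of $\sim_n'$ on $\hat C^n$. Hence I concentrate on proving that $\sim_n'$ is transitive for all sufficiently large $n$, arguing by contradiction.

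Assume there exist $x_n \in C^n_{\frak p_n}$, $y_n \in C^n_{\frak q_n}$, $z_n \in C^n_{\frak r_n}$ with $x_n \sim_n' y_n$, $y_n \sim_n' z_n$ but $x_n \not\sim_n' z_n$ for infinitely many $n$. Since $\frak P$ is finite I may pass to a subsequence along which the triple $(\frak p,\frak q,\frak r)$ is fixed; if two of them coincide the relations force transitivity, so I take them pairwise distinct. The two relations make $\{\frak p,\frak q\}$ and $\{\frak q,\frak r\}$ comparable in $\frak P$, yielding four cases depending on whether $\frak q$ is smallest, largest, or strictly between $\frak p$ and $\frak r$. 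In every case I apply Lemma \ref{convinientlem} to the relevant pair of indices—noting that $\overline{U^n_\bullet}$ lies in the $\delta_n$-neighborhood of $\psi_\bullet^{-1}(K_\bullet)$, which lets me extend the lemma to closure points by a small diagonal approximation—to extract a further subsequence with $x_n\to x\in S_\frak p$, $y_n\to y\in S_\frak q$, $z_n\to z\in S_\frak r$ and
\[
\psi_\frak p(x)=\psi_\frak q(y)=\psi_\frak r(z)\in K_\frak p\cap K_\frak q\cap K_\frak r.
\]

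In particular $\psi_\frak p(S_\frak p)\cap\psi_\frak r(S_\frak r)\neq\emptyset$, so Definition \ref{gcsweak}(5) forces $\frak p$ and $\frak r$ to be comparable; say $\frak p\le\frak r$. Applying Definition \ref{abKchange}(4) to $\Phi^1_{\frak r\frak p}$ together with injectivity of $\psi_\frak p$ on $S_\frak p$ gives $x\in U^1_{\frak r\frak p}$ and $\varphi^1_{\frak r\frak p}(x)=z$, and the same argument supplies the further limit-level membership in $U^1_{\frak q\frak p}$, $U^1_{\frak r\frak q}$, or $U^1_{\frak q\frak r}$ needed to invoke the cocycle condition Definition \ref{gcsweak}(4) in the present case. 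Openness of $U^1_{\frak r\frak p}$ and of the relevant triple intersection $U^1_{\frak r\frak p\frak q}$ then promotes these memberships from the limit to all large $n$, and the cocycle identity yields $\varphi^1_{\frak r\frak p}(x_n)=z_n$. Combined with $x_n\in\overline{U^n_\frak p}$ and $z_n\in\overline{U^n_\frak r}$ this gives $x_n\in C^n_{\frak r\frak p}$, hence $x_n\sim_n' z_n$, the desired contradiction. The main obstacle is the book-keeping across the four cases (middle, minimum, maximum); in each one must pin down the correct triple-intersection membership at the limit so that the cocycle can be applied in the appropriate direction, but the conceptual recipe—extract limits, use property (5) to secure $\frak p\leftrightarrow\frak r$ comparability, invoke Definition \ref{abKchange}(4), and use openness together with the cocycle—is the same throughout.
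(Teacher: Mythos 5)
Your proposal is correct and follows essentially the same route as the paper: reduce to transitivity of $\sim_n'$, argue by contradiction along a subsequence with fixed $(\frak p,\frak q,\frak r)$, extract limits via Lemma \ref{convinientlem}, use Definition \ref{gcsweak}(5) to get $\frak p\leftrightarrow\frak r$ comparability, and then invoke Definition \ref{abKchange}(4), openness of the relevant (triple) intersections, and the cocycle condition to force $x_n\sim_n' z_n$ for large $n$. The only cosmetic difference is that the paper handles the closure points by observing $C^n_{\frak p}\subseteq U^{2\delta_n}_{\frak p}$ rather than by your ``diagonal approximation,'' and it organizes the cocycle step into three explicit cases (Lemma \ref{lem31010}).
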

\begin{proof}
In view of Lemma \ref{onaji} it suffices to show that 
$\sim_n'$ is an equivalence relation for sufficiently large $n$.
\par
We assume that this is not the case.
Note $\sim'_n$ satisfies all the property required for equivalence relation 
possibly except transitivity.
Therefore by taking a subsequence if necessary 
we may assume that there exist $x_n,y_n,z_n \in \hat C^n$ such that 
$x_n \sim_n' y_n$, $y_n \sim_n' z_n$ but $x_n \sim'_n z_n$ does not hold.
\par
Let $x_n \in C^n_{\frak p_n}$, $y_n \in C^n_{\frak q_n}$, $z_n \in C^n_{\frak r_n}$.
Since $\frak P$ is a finite set we may assume, by taking a subsequence if necessary, 
that $\frak p = \frak p_n$, $\frak q = \frak q_n$, $\frak r = \frak r_n$ are independent of $n$.
\par
We remark that $C^n_{\frak p} \subseteq U^{2\delta_n}_{\frak p}$.
Therefore 
we apply Lemma \ref{convinientlem} to $x_n$ and can take a subsequence 
such that $\lim_{n\to \infty} x_n = x$ 
and $y = \lim_{n\to \infty}y_n$  with 
$x \in U_{\frak p\frak q}^1$ and
$
\psi_{\frak p}(x) = \psi_{\frak q}(y).
$
\par
We can again apply Lemma \ref{convinientlem} with $x_n$, $\frak p$, $\frak q$ 
replaced by $y_n$, $\frak q$, $\frak r$, respectively.
Then by taking a subsequence if necessary we have 
$z = \lim_{n\to \infty}z_n$, such that 
$y \in U_{\frak r\frak p}^1$
and 
$
\psi_{\frak q}(y) = \psi_{\frak r}(z).
$
\par
Thus we have $\psi_{\frak p}(x) = \psi_{\frak q}(y) = \psi_{\frak r}(z)$.
Therefore either $\frak p \le \frak r$ or $\frak r \le \frak p$ holds.
We may assume $\frak r\le \frak p$ without loss of generality.
Then since $\psi_{\frak p}(x) = \psi_{\frak r}(z)$
we have $z \in U^1_{\frak p\frak r}$, 
$\varphi_{\frak p\frak r}(z) = x$ by 
Definition \ref{abKchange} 3), 4).
Therefore 
$z_n \in U_{\frak p\frak r}^1$
for sufficiently large $n$, since $U_{\frak p\frak r}^1$ is open in $U_\frak r$.
We use it to show:
\begin{lem}\label{lem31010}
We have $\varphi^1_{\frak p\frak r}(z_n) = x_n$
for sufficiently large $n$.
\end{lem}
\begin{proof}
Since $\psi_{\frak p}(x) = \psi_{\frak q}(y) = \psi_{\frak r}(z)$
Definition \ref{gcsweak} 5) and $\frak r \le \frak p$ imply that 
one of the following holds.
\par (a)
$\frak q \le \frak r \le \frak p$.
(b)
$\frak r \le \frak q \le \frak p$.
(c)
$\frak r \le \frak p \le \frak q$.
\par
In Case (a) we have
$y \in U^1_{\frak r \frak q} \cap
U^1_{\frak p \frak q} \cap
(\varphi^1_{\frak r \frak q})^{-1}(U^1_{\frak p \frak r}).$
Hence  for all sufficiently large $n$,
$y_n \in U^1_{\frak r \frak q} \cap
U^1_{\frak p \frak q} \cap
(\varphi^1_{\frak r \frak q})^{-1}(U^1_{\frak p \frak r})$
and
$x_n = \varphi^1_{\frak p \frak q}(y_n)
=\varphi^1_{\frak p \frak r} \circ \varphi^1_{\frak r \frak q}(y_n)
=\varphi^1_{\frak p \frak r}(z_n)$, by 
Definition \ref{gcsweak} 
4).\par
In Case (b), we have
$z \in U^1_{\frak p \frak r} \cap U^1_{\frak q \frak r}
\cap (\varphi^1_{\frak q \frak r})^{-1}(U^1_{\frak p \frak q})$.
Hence, for all sufficiently large $n$,
$z_n \in U^1_{\frak p \frak r} \cap U^1_{\frak q \frak r}
\cap (\varphi^1_{\frak q \frak r})^{-1}(U^1_{\frak p \frak q})$
and
$\varphi^1_{\frak p \frak r}(z_n)
=\varphi^1_{\frak p \frak q} \circ \varphi^1_{\frak q \frak r}(z_n)
=\varphi^1_{\frak p \frak q}(y_n)
=x_n$.
\par
In Case (c) we have
$z \in U^1_{\frak p \frak r} \cap U^1_{\frak q \frak r}
\cap (\varphi^1_{\frak p \frak r})^{-1}(U^1_{\frak q \frak p}).$
Hence, for sufficiently large $n$,
$z_n \in U^1_{\frak p \frak r} \cap U^1_{\frak q \frak r}
\cap (\varphi^1_{\frak p \frak r})^{-1}(U^1_{\frak q \frak p})$.
Moreover 
$y_n = \varphi_{\frak q\frak p}^1(x_n)$ and
$y_n= \varphi^1_{\frak q \frak r}(z_n)=\varphi^1_{\frak q \frak p} \circ
\varphi^1_{\frak p \frak r}(z_n)$.
Since $\varphi^1_{\frak q \frak p}$ is injective,
we find that
$x_n = \varphi^1_{\frak p \frak r}(z_n)$.
\end{proof}
Lemma \ref{lem31010} implies $x_n \sim_n' z_n$ 
for sufficiently large $n$. This is a contradiction.
\end{proof}
We thus have proved that $\widetriangle{\mathcal U_n}$ satisfies  Definition \ref{defstrong} 7) 
for sufficiently large $n$.
We turn to the proof of
 Definition \ref{defstrong} 8).
Let  $W_{\frak p\frak q} \Subset U_{\frak p\frak q}^1$ be an open neighborhood of 
$\psi_{\frak q}^{-1}(K_{\frak p} \cap K_{\frak q})$.
\begin{lem}\label{lem35}
For sufficiently small $\delta$ we have
\begin{equation}\label{form3911}
(\varphi^1_{\frak p\frak q})^{-1}(U_{\frak p}^{\delta}) \cap U_{\frak q}^{\delta} \subseteq 
W_{\frak p\frak q}.
\end{equation}
\end{lem}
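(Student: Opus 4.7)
The plan is to argue by contradiction using Lemma \ref{convinientlem}, which is exactly the technical compactness statement tailored for sequences living in shrinking $\delta$-neighborhoods of $\psi_{\frak p}^{-1}(K_{\frak p})$ and $\psi_{\frak q}^{-1}(K_{\frak q})$.

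Suppose the conclusion fails. Then there exists a sequence $\delta_n \downarrow 0$ and points
$$
x_n \in (\varphi^1_{\frak p\frak q})^{-1}(U_{\frak p}^{\delta_n}) \cap U_{\frak q}^{\delta_n}
$$
with $x_n \notin W_{\frak p\frak q}$. The sequence $\{x_n\}$ is precisely of the form to which Lemma \ref{convinientlem} applies. After passing to a subsequence, we obtain $x_n \to x$ with $x \in S_{\frak q}$, $x \in U^1_{\frak p\frak q}$, and
$$
\psi_{\frak q}(x) = \psi_{\frak p}(y) \in K_{\frak p} \cap K_{\frak q},
$$
where $y = \lim_{n \to \infty} \varphi^1_{\frak p\frak q}(x_n)$. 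In particular, $x \in \psi_{\frak q}^{-1}(K_{\frak p} \cap K_{\frak q})$.

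Since $W_{\frak p\frak q}$ is by hypothesis an open neighborhood of $\psi_{\frak q}^{-1}(K_{\frak p} \cap K_{\frak q})$, we have $x \in W_{\frak p\frak q}$. Openness of $W_{\frak p\frak q}$ and the convergence $x_n \to x$ in $U_{\frak q}$ then force $x_n \in W_{\frak p\frak q}$ for all sufficiently large $n$, contradicting the assumption that $x_n \notin W_{\frak p\frak q}$ for every $n$ in the chosen sequence.

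The proof is essentially a one-step application of Lemma \ref{convinientlem}, so there is no real obstacle: the only thing one has to check is that the hypothesis of that lemma really is satisfied, i.e.\ that $x_n \in U_{\frak q}^{\delta_n} \cap (\varphi^1_{\frak p\frak q})^{-1}(U_{\frak p}^{\delta_n})$ in the sense required there, which is immediate from how we selected $x_n$. Everything else, including the fact that $x$ lies in $U^1_{\frak p\frak q}$ and that its image under $\psi_{\frak q}$ lands in $K_{\frak p} \cap K_{\frak q}$, is delivered by that lemma.
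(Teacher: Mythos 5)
Your proof is correct and is essentially identical to the paper's: both argue by contradiction, extract a sequence $x_n \notin W_{\frak p\frak q}$ with $\delta_n \to 0$, apply Lemma \ref{convinientlem} to get a limit $x \in \psi_{\frak q}^{-1}(K_{\frak p}\cap K_{\frak q}) \subseteq W_{\frak p\frak q}$, and use openness of $W_{\frak p\frak q}$ to derive the contradiction.
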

\begin{proof}
If (\ref{form3911}) is false there exists $\delta_n > 0$ and
$
x_n \in \left((\varphi^1_{\frak p\frak q})^{-1}(U_{\frak p}^{\delta_n}) \cap U_{\frak q}^{\delta_n}\right) \setminus W_{\frak p\frak q}
$
with $\delta_n \to 0$. 
We apply Lemma \ref{convinientlem} and 
may assume  1), 2), 3), 4) of Lemma \ref{convinientlem}.
Then
$x \in U^1_{\frak p\frak q}$
$,
\psi_{\frak q}(x) = \psi_{\frak p}(y) \in K_{\frak q}\cap K_{\frak p}.
$
It implies  $x \in W_{\frak p\frak q}$.
Thus $x_n \in W_{\frak p\frak q}$ for large $n$.
This is a contradiction.
\end{proof}

\begin{lem}\label{lem310}
$C^n_{\frak p\frak q}$ is a compact subset of $C^n_{\frak q}$ 
for sufficiently large $n$.
\end{lem}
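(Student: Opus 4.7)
The plan is to observe that the inclusion $C^n_{\frak p\frak q} \subseteq C^n_{\frak q}$ is built into the definition (\ref{313}), so the only substantive content is compactness. Since $C^n_{\frak q} = \overline{U^n_{\frak q}}$ is already known to be compact (as $U^n_{\frak q} \Subset U^1_{\frak q}$), I will show instead that $C^n_{\frak p\frak q}$ is closed in $C^n_{\frak q}$. The map $\varphi^1_{\frak p\frak q}$ is continuous on $U^1_{\frak p\frak q}$, but a priori a sequence in $C^n_{\frak p\frak q}$ could converge to a point outside $U^1_{\frak p\frak q}$, so the heart of the argument is to prevent such escape.

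First I would confine $C^n_{\frak p\frak q}$ inside the buffer provided by Lemma \ref{lem35}. Fix a neighborhood $W_{\frak p\frak q} \Subset U^1_{\frak p\frak q}$ of $\psi_{\frak q}^{-1}(K_{\frak p} \cap K_{\frak q})$ as in Lemma \ref{lem35}. From the definition (\ref{defnnnn}) of $U^{\delta}_{\frak p}$ one sees immediately that $\overline{U^{\delta_n}_{\frak p}} \subseteq U^{2\delta_n}_{\frak p}$, and likewise for $\frak q$, provided $\delta_n$ is small enough that $U^{2\delta_n}_{\frak p} \Subset U^1_{\frak p}$. Thus any $x \in C^n_{\frak p\frak q}$ satisfies $x \in U^{2\delta_n}_{\frak q}$ and $\varphi^1_{\frak p\frak q}(x) \in U^{2\delta_n}_{\frak p}$, so $x \in U^{2\delta_n}_{\frak q} \cap (\varphi^1_{\frak p\frak q})^{-1}(U^{2\delta_n}_{\frak p})$. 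Applying Lemma \ref{lem35} with $\delta = 2\delta_n$, for all sufficiently large $n$ we get
$$
C^n_{\frak p\frak q} \;\subseteq\; W_{\frak p\frak q}.
$$

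With this containment established, closedness follows by a routine limit argument. Let $(x_k)$ be a sequence in $C^n_{\frak p\frak q}$ converging to some $x \in C^n_{\frak q}$ in $U_{\frak q}$. Since $x_k \in W_{\frak p\frak q}$, the limit lies in $\overline{W_{\frak p\frak q}} \subseteq U^1_{\frak p\frak q}$, so $\varphi^1_{\frak p\frak q}(x)$ is defined. By continuity $\varphi^1_{\frak p\frak q}(x_k) \to \varphi^1_{\frak p\frak q}(x)$; since each $\varphi^1_{\frak p\frak q}(x_k) \in \overline{U^n_{\frak p}}$ and the latter is closed, $\varphi^1_{\frak p\frak q}(x) \in \overline{U^n_{\frak p}}$. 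Hence $x \in C^n_{\frak p\frak q}$, showing it is closed in the compact set $C^n_{\frak q}$ and therefore compact.

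The only real obstacle is the one just identified: controlling where the limit of a sequence in $C^n_{\frak p\frak q}$ can land, given that $\varphi^1_{\frak p\frak q}$ is only defined on the open set $U^1_{\frak p\frak q}$. Lemma \ref{lem35} is precisely the tool that resolves this, trapping $C^n_{\frak p\frak q}$ inside the relatively compact neighborhood $W_{\frak p\frak q}$ of the overlap locus $\psi_{\frak q}^{-1}(K_{\frak p} \cap K_{\frak q})$ for $n$ large enough, so that sequential limits cannot escape the domain.
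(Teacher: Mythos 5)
Your proof is correct and takes essentially the same route as the paper: both reduce to showing $C^n_{\frak p\frak q}$ is closed in the compact set $C^n_{\frak q}$, both use $\overline{U^{\delta_n}_{\frak q}} \subseteq U^{2\delta_n}_{\frak q}$ together with Lemma \ref{lem35} to trap $C^n_{\frak p\frak q}$ inside $W_{\frak p\frak q} \Subset U^1_{\frak p\frak q}$ for large $n$, and then conclude by continuity of $\varphi^1_{\frak p\frak q}$ on $U^1_{\frak p\frak q}$. You also correctly identified the one real issue — that a limit of a sequence in $C^n_{\frak p\frak q}$ could escape the open domain $U^1_{\frak p\frak q}$ — which is exactly what Lemma \ref{lem35} is designed to prevent.
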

\begin{proof}
It suffices to show that $C^n_{\frak p\frak q}$ is a closed subset of $C^n_{\frak q}$.
Let $x_a \in C^n_{\frak p\frak q}$ be a sequence converging to $x \in C^n_{\frak q}$.
By definition
\begin{equation}\label{form3939}
x_ a \in  \overline{U^{\delta_n}_{\frak q}} \cap (\varphi^1_{\frak p \frak q})^{-1}( \overline{U^{\delta_n}_{\frak p}}).
\end{equation}
Now (\ref{form3939}), (\ref{form3911}) and $\overline{U^{\delta_n}_{\frak q}} \subseteq {U^{2\delta_n}_{\frak q}}$  imply that 
$
x_a \in W_{\frak p\frak q} \Subset U^1_{\frak p\frak q}
$
for sufficiently large $n$.
Therefore $x \in U^1_{\frak p\frak q}$.
Since $\varphi^1_{\frak p\frak q}$ is continuous on $U^1_{\frak p\frak q}$ we have
$
\lim_{a\to\infty}\varphi^1_{\frak p\frak q}(x_a) = \varphi^1_{\frak p\frak q}(x).
$
Since $\varphi^1_{\frak p\frak q}(x_a) \in C^n_{\frak p}$ and $C^n_{\frak p}$ is compact,
 $x \in U^1_{\frak p\frak q}$ implies   $\varphi^1_{\frak p\frak q}(x) \in C^n_{\frak p}$.
Thus $x \in C_{\frak p\frak q}^n$. This proves that $C_{\frak p\frak q}^n$ is closed in $C_\frak q^n$ as required.
\end{proof}
We define
$
C^n = \hat C^n/\sim_n' 
$.
\begin{lem}\label{hasdorn}
The space $C^n$ is Hausdorff with respect to the quotient topology.
\end{lem}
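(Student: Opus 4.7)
My plan is to reduce Hausdorffness of $C^n$ to showing that the graph of $\sim_n'$ is closed in $\hat C^n \times \hat C^n$, and then verify this closedness by a short subsequence argument that leans on Lemma \ref{lem310}. The space $\hat C^n = \coprod_{\frak p} C^n_{\frak p}$ is compact Hausdorff, being a finite disjoint union of compact metrizable spaces. I will invoke the standard topological fact that, on a compact Hausdorff space, a closed equivalence relation produces a Hausdorff quotient: the equivalence classes are compact and disjoint, normality separates them by open sets, and these can be saturated because the saturation of a closed set $A$ is the second projection of $R_n \cap (A \times \hat C^n)$ from a compact space into a Hausdorff one, hence closed. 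Given this, it suffices to show $R_n := \{(x,y) \in \hat C^n \times \hat C^n : x \sim_n' y\}$ is closed.

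To that end, I would take a sequence $(x_k, y_k) \in R_n$ with $(x_k, y_k) \to (x, y)$ and show $x \sim_n' y$. Because $\frak P$ is finite, after extracting a subsequence I can assume $x_k \in C^n_\frak p$ and $y_k \in C^n_\frak q$ for fixed $\frak p, \frak q$, and that the same one of the clauses (a), (b), (c) in the definition of $\sim_n'$ applies for every $k$. Clause (a) is immediate: $x_k = y_k$ passes to $x = y$. By interchanging the roles of $x$ and $y$ I may assume the active clause is (c), so $\frak q \le \frak p$, $y_k \in C^n_{\frak p\frak q}$, and $x_k = \varphi^1_{\frak p\frak q}(y_k)$.

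Now Lemma \ref{lem310} is exactly the input I need: for $n$ sufficiently large, $C^n_{\frak p\frak q}$ is closed (hence compact) in $C^n_\frak q$, so the limit $y$ still lies in $C^n_{\frak p\frak q}$, which in turn is contained in $U^1_{\frak p\frak q}$ (as established in the proof of that lemma). Since $\varphi^1_{\frak p\frak q}$ is continuous on all of $U^1_{\frak p\frak q}$, passing to the limit gives $x = \varphi^1_{\frak p\frak q}(y)$, verifying clause (c) for $(x,y)$. The heart of the argument was already packaged into Lemma \ref{lem310}, so I do not foresee a further obstacle. The one subtlety worth flagging is that $\sim_n'$ is defined via $\varphi^1_{\frak p\frak q}$ rather than $\varphi^n_{\frak p\frak q}$: the limit $y$ may leave $U^n_{\frak p\frak q}$, but stays inside $U^1_{\frak p\frak q}$, which is precisely what legitimizes the continuity step and makes the definition of $\sim_n'$ robust under limits.
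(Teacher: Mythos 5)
Your proof is correct and takes essentially the same approach the paper intends: the paper's own justification is the one-line remark that the statement is ``a standard consequence of Lemma~\ref{lem310},'' and what you have written out is exactly that standard argument, namely deducing from compactness of $C^n_{\frak p\frak q}$ that the relation $\sim_n'$ is closed in the compact Hausdorff space $\hat C^n \times \hat C^n$, and then invoking the fact that a closed equivalence relation on a compact Hausdorff space has a Hausdorff quotient. The key detail you correctly isolate --- that the limit $y$ stays in $C^n_{\frak p\frak q}\subseteq U^1_{\frak p\frak q}$, so continuity of $\varphi^1_{\frak p\frak q}$ applies --- is precisely the content that Lemma~\ref{lem310} supplies.
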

This is a standard consequence of Lemma \ref{lem310}.
We remark that $\vert\widetriangle{\mathcal U^n}\vert = \hat U^n/\sim_n$
by definition.
The inclusion $\hat U^n \to \hat C^n$ induces a map 
$\hat U^n \to C^n$. 
Lemma \ref{onaji} implies that it induces an {\it injective} map
$\vert\widetriangle{\mathcal U^n}\vert \to C^n$.
This map is continuous by the definition of the quotient topology.
Therefore Lemma \ref{hasdorn} implies that 
$\vert\widetriangle{\mathcal U^n}\vert$ is Hausdorff.
The proof of Theorem \ref{thmmain} is now complete.

\begin{rem} We would like to note that the domain 
$U_{\frak p\frak q}^n$ of the coordinate change 
of the shrinking 
$\widetriangle{\mathcal U_n}$ of $\widetriangle{\mathcal U}$
is {\it not} of the form
\begin{equation}\label{tanjyaaunin}
\varphi_{\frak p\frak q}^{-1}(U_{\frak p}^n) 
\cap U_{\frak q}^n
\end{equation}
but is
$$
U_{\frak p\frak q}^n 
= (\varphi_{\frak p\frak q}^1)^{-1}(U_{\frak p}^n) 
\cap U_{\frak q}^n
=
\varphi_{\frak p\frak q}^{-1}(U_{\frak p}^n) 
\cap U_{\frak q}^n \cap U_{\frak p\frak q}^1.
$$
In fact (\ref{tanjyaaunin}) 
is {\it not}
relatively compact in $U_{\frak p\frak q}$ in general. We thank J. Solomon who found an example
to clarify this point and informed it to us.
\end{rem}

\section{Proof of metrizability}
\label{section5}
In this section we prove Proposition \ref{prop51}.

We recall the following well-known definition.
A family of subsets $\{U_i \mid i \in I\}$ of a topological space $Y$ containing $x \in Y$
is said to be a neighborhood basis of $x$ if
\begin{enumerate}
\item[(nbb 1)] each $U_i$ contains an open neighborhood of $x$,
\item[(nbb 2)] for each open set $U$ containing $x$ there exists $i$ such that $U_i \subseteq U$.
\end{enumerate}
A family of open subsets $\{U_i \mid i \in I\}$ of a topological space $X$ is said to be a
basis of the open sets if for each $x$ the set $\{U_i \mid x \in U_i\}$
is a neighborhood basis of $x$. A topological space is said to satisfy the second axiom of countability
if there exists a countable basis of open subsets $\{U_i \mid i \in I\}$.
A classical result of Urysohn says a topological space is metrizable
if it is regular and satisfies the second axiom of countability.
(See a standard text book such as \cite{kelly} for these facts.)
\begin{proof}[Proof of Proposition \ref{prop51}]
We put $K_{\frak p} = \overline{U'_{\frak p}}$
and consider
$K = \coprod_{\frak p \in \frak P} K_{\frak p}/\sim_K$ in $\vert {\widetriangle{\mathcal U}} \vert$.
(Here $\sim_K$ is the restriction of the equivalence relation 
$\sim_U$ obtained by applying Definition \ref{defn2424} to 
$\widetriangle{\mathcal U}$. 
($\sim_U$ is an equivalence relation on 
$\coprod_{\frak p \in \frak P} U_{\frak p} \supseteq \coprod_{\frak p \in \frak P} K_{\frak p}$.)
Let
$
\Pi_{\frak p} :K_{\frak p} \to K
$
be the 
{the natural inclusion followed by the projection. 
As a subset of $\vert {\widetriangle{\mathcal U}} \vert$, we can also write
$K = \bigcup_{\frak p \in \frak P} K_{\frak p} \subseteq \vert {\widetriangle{\mathcal U}} \vert$.}
Note the induced topology of the embedding $U' \to K$
coincides with the induced topology of the embedding
$U' \to \vert {\widetriangle{\mathcal U}}  \vert$.
This is because the map $K \to \vert {\widetriangle{\mathcal U}}  \vert$
is a topological embedding. ($K$ is compact and $\vert {\widetriangle{\mathcal U}} \vert$  is
Hausdorff.)
Therefore, it suffices to show that $K$ is metrizable with respect to the quotient topology
of
$
\Pi_{\frak P,K}:
\coprod_{\frak p \in \frak P} K_{\frak p} \to K.
$
We remark that $K$ is compact.
$K$ is Hausdorff since $\vert {\widetriangle{\mathcal U}}  \vert$ is Hausdorff and
$K \to \vert {\widetriangle{\mathcal U}}  \vert$ is injective and continuous.
Therefore $K$ is regular.
Now it remains to show that $K$ satisfies the second axiom of countability.
This is \cite[Lemma 8.5]{foootech}. We repeat its proof here for the convenience
of the reader.
\par
For each $\frak p$, we take a countable basis $\frak U_{\frak p}
=\{U_{\frak p,i_{\frak p}} \subseteq K_{\frak p} \mid i_{\frak p}  \in I_{\frak p}\}$
of open sets of $K_{\frak p}$. We may assume $\emptyset \in \frak U_{\frak p}$.
\par
For each $\vec i = (i_{\frak p})_{\frak p \in \frak P}$ ($i_{\frak p} \in I_{\frak p}$) we define
$U(\vec i)$ to be the interior of the set
\begin{equation}\label{U+basis1}
U^+(\vec i) := \bigcup_{\frak p \in \frak P}\Pi_{\frak p}(U_{\frak p,i_{\frak p}}).
\end{equation}
Then $\{ U({\vec i}) \}$  is a countable family of open subsets of $K$.
We will prove that this family is a basis of open sets of $K$.
\par
Let $q \in K$, we put
\begin{equation}\label{defPxsss}
\frak P(q)
=
\{\frak p \in \frak P \mid \exists x, \,\,  q=[x], \, x \in K_{\frak p}\}.
\end{equation}
Here and hereafter we identify $K_{\frak p}$ to the image of $\Pi_{\frak P,K}(K_{\frak p})$ in $K$.
Note since $K$ is Hausdorff and $K_{\frak p}$ is compact,
the natural inclusion map $K_{\frak p} \to \coprod_{\frak p \in\frak P} K_{\frak p}$
induces a topological embedding $K_{\frak p} \to K$.
\par
For $\frak p \in \frak P(q)$, we have $x_{\frak p} \in K_{\frak p}$ with $[x_\frak p] = q$.
We put
$$
I_{\frak p}(q) = \{i_{\frak p} \in I_{\frak p} \mid x_{\frak p} \in U_{\frak p,i_{\frak p}}\}.
$$
Then $\{U_{\frak p,i_{\frak p}} \mid i_{\frak p} \in I_{\frak p}(q)\}$ is a countable neighborhood basis of $x_{\frak p}$
in $K_{\frak p}$.
For each $\vec i = (i_{\frak p}) \in \prod_{\frak p\in \frak P(q)} I_{\frak p}(q)$,
we set
\begin{equation}\label{U+basis12}
U^+(\vec i) =\bigcup_{\frak p \in \frak P(q)} \Pi_{\frak p}(U_{\frak p,i_{\frak p}})
\subseteq K.
\end{equation}
We claim that the collection $\{U^+(\vec i)  \mid \vec i \in \prod_{\frak p\in \frak P(q)} I_{\frak p}(q)\}$
is a neighborhood basis of $q$ in $K$ for any $q$.
The claim follows from Lemmata \ref{9sublem1},\, \ref{9sublem2} below.
\par
\begin{lem}\label{9sublem1} The subset
$U^+(\vec i)$ is a neighborhood of $q$ in $K$.
\end{lem}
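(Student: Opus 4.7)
The plan is to show that $q$ lies in the interior of $U^+(\vec i)$ by a contradiction argument, using compactness of the finite set $\frak P$ and of each $K_\frak p$, together with the fact (stated just before the lemma) that $\Pi_\frak p : K_\frak p \to K$ is a topological embedding, so that every $q \in K$ has at most one representative in each $K_\frak p$.

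First I would suppose, for contradiction, that $q$ is not an interior point of $U^+(\vec i)$; equivalently, $q$ lies in the closure of $F := K \setminus U^+(\vec i)$. I would then take a net $\{q_\alpha\}$ in $F$ with $q_\alpha \to q$ (nets rather than sequences, since first countability of $K$ is not yet in hand; eventually sequences suffice once metrizability is established, but here we should be safe).

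Next I would lift: for each $\alpha$, choose $\frak r_\alpha \in \frak P$ and $\tilde q_\alpha \in K_{\frak r_\alpha}$ with $\Pi_{\frak r_\alpha}(\tilde q_\alpha) = q_\alpha$. Since $\frak P$ is finite, a subnet has $\frak r_\alpha = \frak r$ constant, and since $K_\frak r$ is compact, a further subnet gives $\tilde q_\alpha \to \tilde q \in K_\frak r$. Continuity of $\Pi_\frak r$ and the Hausdorff property of $K$ force $\Pi_\frak r(\tilde q) = q$, so $\frak r \in \frak P(q)$; the injectivity of $\Pi_\frak r$ (embedding) then identifies $\tilde q$ with the unique representative $x_\frak r$ of $q$ in $K_\frak r$. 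Because $i_\frak r \in I_\frak r(q)$ means precisely $x_\frak r \in U_{\frak r, i_\frak r}$, and $U_{\frak r, i_\frak r}$ is open in $K_\frak r$, the convergence $\tilde q_\alpha \to x_\frak r$ gives $\tilde q_\alpha \in U_{\frak r, i_\frak r}$ eventually. Then $q_\alpha = \Pi_\frak r(\tilde q_\alpha) \in \Pi_\frak r(U_{\frak r, i_\frak r}) \subseteq U^+(\vec i)$ eventually, contradicting $q_\alpha \in F$.

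The subtle point, and the reason the definition of $U^+(\vec i)$ is what it is, is that $\Pi_\frak p(U_{\frak p, i_\frak p})$ is generally \emph{not} open in $K$ (only open inside the compact subset $\Pi_\frak p(K_\frak p)$), so $U^+(\vec i)$ need not be open and one cannot simply quote openness. What makes the argument go through is that the lifting can a priori land in \emph{any} chart $K_\frak r$, and we must be prepared to control all of them; this is exactly why $\vec i$ is indexed by every $\frak p \in \frak P(q)$ rather than a single one, and why the finiteness of $\frak P$ together with the uniqueness of the representative $x_\frak r$ (coming from the embedding property of $\Pi_\frak r$) are essential.
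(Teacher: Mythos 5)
Your proof is correct, but it takes a different route from the paper. The paper argues directly: it forms
\[
K_0 = \bigcup_{\frak p \in \frak P(q)}\Pi_{\frak p}(K_{\frak p} \setminus U_{\frak p,i_{\frak p}})
\;\cup\;
\bigcup_{\frak p \notin \frak P(q)}\Pi_{\frak p}(K_{\frak p}),
\]
observes that each piece is a continuous image of a compact set in the Hausdorff space $K$ (hence closed), so $K_0$ is a finite union of closed sets and hence closed, and then checks by the uniqueness-of-representative-per-chart argument that $q \in K\setminus K_0 \subseteq U^+(\vec i)$. You instead argue by contradiction with a net $q_\alpha\to q$ in $K\setminus U^+(\vec i)$, lift to a constant chart $K_{\frak r}$ by finiteness of $\frak P$, extract a convergent subnet by compactness, identify the limit with $x_{\frak r}$ via the embedding $K_{\frak r}\hookrightarrow K$, and get a contradiction from openness of $U_{\frak r,i_{\frak r}}$ in $K_{\frak r}$. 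Both proofs use exactly the same ingredients (finiteness of $\frak P$, compactness of each $K_{\frak p}$, Hausdorffness of $K$, the embedding property and hence uniqueness of the representative in each chart), but the paper's version is slightly more economical: it produces an explicit open neighborhood $K\setminus K_0$ of $q$ in one step, whereas yours routes through a compactness/net argument that in effect re-derives the closedness of the same finitely many images. Your closing remarks about why $\Pi_{\frak p}(U_{\frak p,i_{\frak p}})$ need not be open and why $\vec i$ is indexed over all of $\frak P(q)$ are accurate and capture the point of the construction.
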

\begin{proof}
For $\frak p \in \frak P(q)$ the set $K_{\frak p} \setminus U_{\frak p,i_{\frak p}}$
is a closed subset of $K_{\frak p}$ and so is compact.
Therefore $\Pi_{\frak p}(K_{\frak p} \setminus U_{\frak p,i_{\frak p}})$ is a compact subset in the Hausdorff space $K$ and so is closed.
\par
If $\frak p \notin \frak P(q)$ then we consider  $\Pi_{\frak p}(K_{\frak p})$ which is closed.
\par
Now we put
$$
K_0 = \bigcup_{\frak p \in \frak P(q)}\Pi_{\frak p}(K_{\frak p} \setminus U_{\frak p,i_{\frak p}})
\cup
\bigcup_{\frak p \notin \frak P(q)}\Pi_{\frak p}(K_{\frak p}).
$$
This is a finite union of closed sets and so is closed.
It is easy to see that
$
q \in K \setminus K_0 \subseteq U^+(\vec i).
$
\end{proof}
\begin{lem} \label{9sublem2} The collection $\{U^+(\vec i)\}$ satisfies
the property (nbb 2)  of the neighborhood basis above.
\end{lem}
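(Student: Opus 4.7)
The plan is to reduce (nbb 2) to the corresponding basis property at $x_{\frak p}$ in each chart $K_{\frak p}$, using only continuity of the quotient map together with the finiteness of $\frak P(q)$, which lets us make a single simultaneous choice of basis indices.

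First I would fix an arbitrary open neighborhood $U$ of $q$ in $K$. For each $\frak p \in \frak P(q)$, the map $\Pi_{\frak p} : K_{\frak p} \to K$ is continuous, being the composition of the inclusion $K_{\frak p} \hookrightarrow \coprod_{\frak p'} K_{\frak p'}$ (continuous for the disjoint union topology) with the quotient map $\Pi_{\frak P,K}$. Consequently $\Pi_{\frak p}^{-1}(U)$ is open in $K_{\frak p}$, and since $\Pi_{\frak p}(x_{\frak p}) = q \in U$, it contains $x_{\frak p}$.

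Next I would invoke the basis property at $x_{\frak p}$: by the definition of $I_{\frak p}(q)$ and the fact that $\frak U_{\frak p}$ is a countable basis of open sets of $K_{\frak p}$, the family $\{U_{\frak p,i_{\frak p}} \mid i_{\frak p} \in I_{\frak p}(q)\}$ is a neighborhood basis of $x_{\frak p}$ in $K_{\frak p}$. Hence there exists $i_{\frak p} \in I_{\frak p}(q)$ with $U_{\frak p,i_{\frak p}} \subseteq \Pi_{\frak p}^{-1}(U)$, so that $\Pi_{\frak p}(U_{\frak p,i_{\frak p}}) \subseteq U$. Because $\frak P(q)\subseteq\frak P$ is finite, we may assemble these choices into $\vec i = (i_{\frak p})_{\frak p \in \frak P(q)}$ and conclude
$$
U^+(\vec i) \;=\; \bigcup_{\frak p \in \frak P(q)} \Pi_{\frak p}(U_{\frak p,i_{\frak p}}) \;\subseteq\; U,
$$
which is exactly property (nbb 2).

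I do not anticipate any serious obstacle here: the argument is essentially a one-step pullback that exploits continuity of $\Pi_{\frak p}$ and the finiteness of $\frak P(q)$. The only point worth stating explicitly is why $\Pi_{\frak p}$ is continuous, which is immediate from the definition of the quotient topology on $K$ and the disjoint union topology on $\coprod_{\frak p} K_{\frak p}$.
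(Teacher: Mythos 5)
Your proof is correct and follows essentially the same route as the paper: pull $U$ back to each chart $K_{\frak p}$, use the neighborhood basis of $x_{\frak p}$ to pick $U_{\frak p,i_{\frak p}}$ inside that preimage, and assemble $\vec i$. The paper phrases the pullback as ``$U \cap K_{\frak p}$ is open in $K_{\frak p}$ since $K_{\frak p}\to K$ is a topological embedding,'' whereas you phrase it as ``$\Pi_{\frak p}^{-1}(U)$ is open since $\Pi_{\frak p}$ is continuous''; these are the same step.
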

\begin{proof}
Let $U \subseteq K$ be an open subset containing $q$.
{Since}  the map $K_{\frak p} \to K$ is a topological embedding,
$U\cap K_{\frak p}$ is an open set of $K_{\frak p}$.
Therefore for each $\frak p \in \frak P(q)$,
the set  $U\cap K_{\frak p}$ is a neighborhood of $x_{\frak p}$
in $K_{\frak p}$. By the definition of neighborhood basis in $K_\frak p$,
there exists $i_{\frak p}$ such that $U_{\frak p,i_{\frak p}} \subseteq U\cap K_{\frak p}$.
We put $\vec i = (i_{\frak p})$. Then
$U^+(\vec i) \subseteq U$ as required.
\end{proof}
We remark that $U^+(\vec i)$ in (\ref{U+basis12}) is a special case of $U^+(\vec i)$ in
(\ref{U+basis1}).
(We take $U_{\frak p,i_{\frak p}} = \emptyset$ for $\frak p \notin \frak P(x)$.)
The family $U(\vec i)$ is a countable basis of open sets of $K$.
Proposition \ref{prop51} is {now} proved.
\end{proof}
\begin{rem}\label{rem5252}
Note $U'$  can also be written as $\coprod U'_{\frak p}/\sim$ for certain
equivalence relation $\sim$. However we do not equip it with the quotient topology
but with the subspace topology of the quotient topology on $\vert {\widetriangle{\mathcal U}} \vert$.
\end{rem}
\vspace{0.1in}
\noindent
{\bf Acknowledgement.} \ \  The authors would like to thank Dominic Joyce and Jake Solomon
for helpful comments.
Kenji Fukaya is supported partially by JSPS Grant-in-Aid for Scientific Research
No. 23224002 and NSF Grant No. 1406423,  Yong-Geun Oh by the IBS project IBS-R003-D1,
Hiroshi Ohta by JSPS Grant-in-Aid
for Scientific Research No. 23340015,  and Kaoru Ono by JSPS Grant-in-Aid for
Scientific Research, Nos. 26247006, 23224001.
\bibliographystyle{amsalpha}

\end{document}